\documentclass[onesided]{article}
\RequirePackage{amsmath,amsfonts,amssymb}
\RequirePackage{mathtools}
\RequirePackage{graphicx}
\RequirePackage{pstricks}
\RequirePackage{epic,eepic}
\RequirePackage{color}
\RequirePackage{multicol}
\RequirePackage{longtable}
\RequirePackage{ifthen}
\RequirePackage{calc}
\RequirePackage[all,cmtip]{xy}

\let\dim\relax
\DeclareMathOperator{\rank}{rank}
\DeclareMathOperator{\dim}{dim}

\let\im\relax

\DeclareMathOperator{\im}{im}

\DeclareMathOperator{\Hom}{Hom}
\DeclareMathOperator{\id}{id}
\DeclareMathOperator{\Aut}{Aut}

\newcommand{\gen}[1]{\left\langle#1\right\rangle} 
\newcommand{\then}{\Rightarrow}

\renewcommand{\iff}{\Leftrightarrow}

\newcommand{\ds}{\displaystyle}

\newcommand{\zz}{\mathbb{Z}}

\newcommand{\rr}{\mathbb{R}}

\usepackage{fullpage} 

\newtheorem{lemma}{Lemma}
\newtheorem{theorem}{Theorem}
\newtheorem{proposition}{Proposition}

\newenvironment{proof}{\medskip\noindent\textit{Proof: \/}}
               {\begin{flushright}\rule{2mm}{2mm}\end{flushright}\par\medskip}
\newenvironment{remark}{\medskip\noindent\textbf{Remark: }}
               {\par\medskip}

\begin{document}

\title{Diagonal approximation and the cohomology ring\\ of torus fiber bundles}
\date{}
\author{S\'ergio Tadao Martins}
\maketitle

\abstract{For a torus bundle $(S^1\times S^1)\to E \to S^1$, we
  construct a finite free resolution of $\zz$ over $\zz[\pi_1(E)]$ and
  compute the cohomology groups $H^*(\pi_1(E), \zz)$ and
  $H^*(\pi_1(E),\zz_p)$ for a prime $p$. We also construct a partial
  diagonal approximation for the resolution, which allows us to
  compute the cup product in $H^*(\pi_1(E), \zz)$ and
  $H^*(\pi_1(E),\zz_p)$.}

\section{Introduction}

The calculation of the cohomology of a given group, including its ring
structure, is relevant for many applications and is considered an
interesting problem in its own right. For finite groups, for example,
see~\cite{Adem2004}. Another example are the manifolds that are
$K(\pi,1)$ spaces, in special many closed $3$-manifolds. These
manifolds are all $K(\pi,1)$ spaces with the exception of the
spherical ones and four $3$-manifolds that are covered by
$S^2\times\rr$ (see~\cite{Scott}), and so the cohomology rings of
these manifolds coincide with the cohomology rings of their
fundamental groups.

In addition, for a group $\pi$, the efficient calculation of the
multiplicative structure given by the cup product in $H^*(\pi,M)$ for
various coefficients $M$ is most easily accomplished by obtaining a
diagonal approximation for a projective resolution of $\zz$ over $\zz
\pi$. This was done, for example, by Tomoda and Zvengrowski
in~\cite{TomodaPeter2008}, where they computed projective resolutions
and diagonal approximations for groups that arise as the fundamental
groups of some Seifert $3$-manifolds. In that case, the groups
considered have $4$-periodic cohomology and hence their cohomology
groups are not isomorphic to the cohomology groups of the
corresponding manifolds.

In this paper, we consider the problem of the computing the cohomology
rings of the $3$-manifolds known as torus bundles. Since torus bundles
are $K(\pi,1)$ spaces, we compute their cohomology rings by
constructing a finite free resolution of $\zz$ over $\zz G$, where $G$
is the fundamental group of the torus bundle, and then we construct a
(partial) diagonal approximation for that resolution. Finally, we
compute the multiplicative structure of $H^*(G,\zz)$ and
$H^*(G,\zz_p)$, where $p$ is prime. Although we only did the
calculation for the trivial coefficients $\zz$ and $\zz_p$, the
methods we employ also allow us to deal with other systems of
coefficients, even non trivial ones. We didn't further explore the
computation for non trivial coefficients in order to keep the length
of the paper to a reasonable size, leaving it for specific
applications.

We observe also that J. Hillman has recently determined the cohomology
ring of the $Sol^3$-manifolds with coefficients $\zz_2$ using
different methods. See~\cite{Hillman}.

\bigskip
A {\it torus bundle} is the total space $E$ of a fiber bundle with the
torus $S^1\times S^1$ as the fiber and the circle $S^1$ as the base
space. Given a torus bundle $(S^1\times S^1)\to E \to S^1$, the
fundamental group of the total space $E$ fits in an exact sequence of
groups
\[
\xymatrix{
1 \ar[r] & \pi_1(S^1\times S^1) \ar[r] & \pi_1(E) \ar[r] & \pi_1(S^1) \ar[r] & 1\phantom{,} \\
1 \ar[r] & \zz\oplus\zz \ar[r]\ar@{-}[u]|\cong & \pi_1(E) \ar[r]\ar@{=}[u] & \zz \ar[r]\ar@{-}[u]|\cong & 1,
}
\]
and that implies $\pi_1(E) \cong (\zz\oplus\zz)\rtimes_\theta \zz$ for
some action $\theta\colon \zz \to \Aut(\zz\oplus\zz)\cong GL_2(\zz)$,
since $\zz$ is a free group.

Let $\zz\oplus\zz = \gen{a,b \mid ab=ba}$ be the fundamental group of
the torus and $\zz = \gen{t}$ be the fundamental group of the circle,
and let the action $\theta$ be such that $\theta(t)(a) = a^\alpha
b^\beta$ and $\theta(t)(b) = a^\gamma b^\delta$ for
$\begin{bmatrix}\alpha & \gamma \\ \beta & \delta\end{bmatrix} \in
  GL_2(\zz)$. For the elements of $G=\pi_1(E)$, we use the notation $G
  = \{(a^mb^n,t^k) \::\: m,n,k\in \zz\}$, and we write $\theta
  = \begin{bmatrix}\alpha & \gamma \\ \beta & \delta\end{bmatrix}$.

In Section~\ref{section:resolution}, we construct a finite free
resolution $P$ of $\zz$ as a trivial $\zz G$-module and compute the
groups $H^*(G,\zz)$ and $H^*(G,\zz_p)$ for $p$ prime.  In
Section~\ref{section:diagonal}, we define a partial diagonal
approximation $\Delta\colon P\to P\otimes P$, which will be enough to
allow us to compute the cup product in $H^*(G,\zz)$ and
$H^*(G,\zz_p)$.

\section{Free resolution of $\zz$ over $\zz G$}\label{section:resolution}

Given the exact sequence of groups
\[
\xymatrix{
1 \ar[r] & K \ar[r] & G \ar[r] & H \ar[r] & 1,
}
\]
Wall describes in~\cite{Wall61} how we can construct a free resolution
of $\zz$ over $\zz G$ from free resolutions of $\zz$ over $\zz K$ and
$\zz H$: let
\[
\xymatrix{
\cdots \ar[r] & B_r \ar[r] & \cdots \ar[r] & B_1 \ar[r] &
  B_0 \ar[r]^-\varepsilon & \zz \ar[r] & 0
}
\]
be a free resolution over $\zz K$, and let
\[
\xymatrix{
\cdots \ar[r] & C_s \ar[r] & \cdots \ar[r] & C_1 \ar[r] &
  C_0 \ar[r] & \zz \ar[r] & 0
}
\]
be a free resolution over $\zz H$, with $C_s$ free on $\alpha_s$
generators.

Since $\zz G$ is a free $\zz
K$-module, $\zz G\otimes_{\zz K} B_r$ is a free $\zz G$-module and
$1\otimes\varepsilon\colon \zz G\otimes_{\zz K} B \to \zz G
\otimes_{\zz K} \zz \cong \zz H$ induces an isomorphism in homology.
We define $D_s$ as the direct sum of $\alpha_s$ copies of $\zz
G\otimes_{\zz K} B$, which gives us an augmentation map over the
direct sum of $\alpha_s$ copies of $\zz H$, that we identify with
$C_s$ and write $\varepsilon_s\colon D_s \to C_s$. If $A_{r,s}$ is the
submodule of $D_s$ given by the direct sum of $\alpha_s$ copies of
$\zz G\otimes_{\zz K} B_r$, then $A_{r,s}$ is a free $\zz G$-module
and $D_s$ is the direct sum of the modules $A_{r,s}$. Denoting by
$d_0$ the differential in each of the complexes $D_s$, Wall proves
in~\cite{Wall61} the following lemma:

\begin{lemma}[Wall]\label{lemma:1wall}
There are $\zz G$-homomorphisms $d_k\colon A_{r,s} \to A_{r+k-1,s-k}$
such that
\begin{itemize}
\item[(i)] $d_1\varepsilon_{s-1} = \varepsilon_sd\colon A_{0,s} \to
  C_{s-1}$, where $d$ denotes the differential in $C$;
\item[(ii)] $\ds \sum_{i=0}^kd_id_{k-i} = 0$ for all $k$, where $d_k$
  is null if $r=k=0$ or if $s<k$.
\end{itemize}
\end{lemma}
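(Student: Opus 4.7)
The plan is to build the family $\{d_k\}_{k\geq 1}$ by induction on $k$, and for each $k$ to define $d_k$ on $A_{r,s}$ by a second induction on $r$. The two tools driving the construction are freeness of every $A_{r,s}$ as a $\zz G$-module and exactness of $\cdots \to A_{1,s} \to A_{0,s} \xrightarrow{\varepsilon_s} C_s \to 0$ for each $s$, which holds because $\zz G\otimes_{\zz K} B$ is a free $\zz G$-resolution of $\zz G\otimes_{\zz K}\zz\cong \zz H$ and $D_s$ is the sum of $\alpha_s$ copies of it augmented onto the corresponding sum $C_s$.

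For $k=1$, on a $\zz G$-basis of $A_{0,s}$ I would pick, for each basis vector $e$, a preimage under $\varepsilon_{s-1}$ of the element $d\varepsilon_s(e)\in C_{s-1}$; extending linearly defines $d_1$ on $A_{0,s}$ and secures property (i). I would then propagate $d_1$ to $A_{r,s}$ for $r\geq 1$ by the standard comparison-of-resolutions trick: on each basis vector $e$ of $A_{r,s}$, the element $-d_1(d_0 e)\in A_{r-1,s-1}$ is a $d_0$-cycle by the instance of (ii) already established one step below and, when $r=1$, augments to zero because $\varepsilon_s d_0=0$; hence it lies in the image of $d_0$, and any preimage serves as $d_1(e)$.

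For $k\geq 2$, with $d_0,\ldots,d_{k-1}$ in hand, set $\phi_k:=-\sum_{i=1}^{k-1} d_i d_{k-i}$. I would again run an induction on $r$ and define $d_k(e)$ on each basis vector as a preimage under $d_0$ of $\phi_k(e)-d_k(d_0 e)$. The one real algebraic check is that this element is a $d_0$-cycle (or augments to zero when it lies in $A_{0,s-k}$). The interior case reduces, via the lower-order relations $d_0 d_i=-d_i d_0-\sum_{j=1}^{i-1}d_j d_{i-j}$, to showing that two triple sums indexed by $\{(a,b,c):a+b+c=k,\ a,b,c\geq 1\}$ cancel against each other; the bottom case reduces, by iterating property (i) to move $\varepsilon_{s-k}$ past the successive $d_i$'s, to $d^2=0$ in $C_*$.

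The main obstacle is precisely this combinatorial verification: the cancellation of the two triple sums and, at the augmentation layer, the telescoping that lets $\varepsilon_{s-k}\phi_k$ be pushed through the $d_i$'s until it becomes a multiple of $d^2\varepsilon_s$. Once these identities are confirmed, every $d_k$ is obtained by a routine lifting from freeness and acyclicity, and the relations (i) and (ii) hold by construction.
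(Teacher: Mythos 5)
Your argument is correct: the double induction (outer on $k$, inner on $r$), with each $d_k(e)$ obtained by lifting $\phi_k(e)-d_k(d_0e)$ through the exact augmented columns $\cdots\to A_{1,s}\to A_{0,s}\xrightarrow{\varepsilon_s}C_s\to 0$, and with the cycle condition reducing to the cancellation of the two triple sums over $\{(a,b,c): a+b+c=k,\ a,b,c\ge 1\}$ (and, in the only genuine bottom case $k=2$, $r=0$, to $d^2=0$ in $C$ via two applications of (i)), is precisely Wall's original proof. The paper itself does not reprove this lemma but quotes it from Wall's article, so your proposal supplies the standard argument that the paper implicitly relies on.
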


Given those homomorphisms, the following theorem is then proved:

\begin{theorem}[Wall]\label{theorem:1wall}
Let $A$ be the direct sum of the modules $A_{r,s}$, graded by $\dim
A_{r,s} = r+s$, and let $d = \sum d_k$. The complex $(A,d)$ is
acyclic, and hence gives us a free resolution of $\zz$ over $\zz G$.
\end{theorem}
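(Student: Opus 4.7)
The plan is to compute the homology of $(A,d)$ via the spectral sequence associated with the natural $s$-filtration, and show it is $\zz$ concentrated in degree $0$; freeness is immediate since each $A_{r,s}$ is a free $\zz G$-module and the direct sum of free modules is free.

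First I would check that $d=\sum_k d_k$ is genuinely a differential: the relation $d^2 = \sum_n \bigl(\sum_{i+j=n} d_i d_j\bigr)=0$ is exactly condition (ii) of Lemma~\ref{lemma:1wall}, summed component by component. Next I would define an increasing filtration on $A$ by
\[
F_pA_n \;=\; \bigoplus_{\substack{r+s=n \\ s\le p}} A_{r,s}.
\]
Because $d_k$ sends $A_{r,s}$ into $A_{r+k-1,s-k}$, each $d_k$ with $k\ge 1$ strictly lowers the filtration degree, while $d_0$ preserves it. Since for each total degree $n$ we have $0\le s\le n$, the filtration is bounded in every degree, so the associated spectral sequence converges strongly to $H_*(A,d)$.

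I would then identify the pages. On $E^0$ the only differential is $d_0$, and by construction $A_{*,s}$ is the direct sum of $\alpha_s$ copies of $\zz G\otimes_{\zz K}B$. Since $\zz G$ is free (hence flat) over $\zz K$ and $B\to \zz$ is a resolution, $\zz G\otimes_{\zz K}B$ is a resolution of $\zz G\otimes_{\zz K}\zz\cong\zz H$; this is the key input that was recorded in the paragraph preceding the lemma. Consequently
\[
E^1_{p,q}=H_q(A_{*,p},d_0)=\begin{cases}C_p & q=0,\\ 0 & q>0,\end{cases}
\]
with the isomorphism realized by $\varepsilon_p$. The only remaining differential is $d^1$, induced by $d_1$, and condition (i) of Lemma~\ref{lemma:1wall} says precisely that under the identification $\varepsilon_*$ this induced map agrees with the differential of $C$. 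Hence $E^2_{p,q}=H_p(C_*)\cdot\delta_{q,0}=\zz$ for $p=q=0$ and $0$ otherwise. The sequence degenerates at $E^2$, so $H_*(A,d)=\zz$ in degree $0$, proving acyclicity of the augmented complex.

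The main technical care goes into the spectral sequence set-up: verifying that the filtration is exhaustive and bounded in each total degree (so convergence is automatic), and checking that the identifications of $E^0$ with $\bigoplus^{\alpha_s}\zz G\otimes_{\zz K}B$ and of $E^1$ with $C_*$ via $\varepsilon_s$ are compatible with the induced differentials. The non-trivial compatibility on $E^1$ is exactly item (i) of the lemma, which is why that item was stated; everything else is bookkeeping. Freeness of $A_n=\bigoplus_{r+s=n}A_{r,s}$ as a $\zz G$-module, and finiteness when $B$ and $C$ are finite, are immediate from the definition of the $A_{r,s}$.
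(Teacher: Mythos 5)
Your argument is correct. Note, however, that the paper does not actually prove this statement: it is quoted as Wall's theorem and the proof is deferred entirely to the reference~\cite{Wall61}, so there is no in-paper proof to compare against. Measured against Wall's original argument, your spectral-sequence formulation is the standard modern packaging of the same idea: Wall filters $A$ by the $s$-degree and argues by a direct induction/diagram chase up the filtration, which is exactly the information your $E^0$, $E^1$ and $E^2$ pages encode. Your identification of the two key inputs is accurate --- the exactness of the columns $A_{*,s}\cong\bigoplus^{\alpha_s}\zz G\otimes_{\zz K}B$ (flatness of $\zz G$ over $\zz K$) gives the collapse of $E^1$ onto the bottom row, and condition (i) of Lemma~\ref{lemma:1wall} is precisely what makes $d^1$ agree with the differential of $C$ under the augmentations $\varepsilon_s$, so that $E^2$ computes $H_*(C)=\zz$. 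The convergence bookkeeping is also in order, since in total degree $n$ the filtration index $s$ ranges over $0\le s\le n$. What the spectral-sequence route buys you is a clean separation of the two exactness inputs and an argument that generalizes verbatim to unbounded resolutions $B$ and $C$; what Wall's hands-on version buys is that it avoids invoking convergence of a spectral sequence and stays entirely elementary. Either way, your proof is complete and correct.
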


\bigskip
Let's apply these ideas for the group extension
\[
\xymatrix{
1 \ar[r] & \zz\oplus\zz \ar[r] & G \ar[r] & \zz \ar[r] & 1,
}
\]
where $G = \pi_1(E)$ is the group given in the Introduction. A free
resolution of $\zz$ over $\zz[\zz\oplus\zz]$ is given by

\begin{equation}\label{eq:resZZ}
\xymatrix{
0 \ar[r] & Q_2 \ar[r]^-{d_2} & Q_1 \ar[r]^-{d_1} & Q_0 \ar[r]^-{\varepsilon} & \zz \ar[r] & 0,
}
\end{equation}
where
\begin{align*}
Q_0 &= \gen{e_0} \cong \zz[\zz\oplus\zz], \\
Q_1 &= \gen{e_1^1,e_1^2} \cong \zz[\zz\oplus\zz]\oplus \zz[\zz\oplus\zz], \\
Q_2 &= \gen{e_2} \cong \zz[\zz\oplus\zz],
\end{align*}
and the differentials $d_i$ ($i=1,2$) and $\varepsilon$ are given by
\begin{equation}\label{eq:homsZZ}
\begin{array}{rl}
\varepsilon(e_0) &\!\!\!\!= 1, \\
d_1(e_1^1) &\!\!\!\!= (a-1)e_0, \\
d_1(e_1^2) &\!\!\!\!= (b-1)e_0, \\
d_2(e_2) &\!\!\!\!= (1-b)e_1^1 + (a-1)e_1^2.
\end{array}
\end{equation}

The fact that~(\ref{eq:resZZ}) is indeed a free resolution can be
easily seen considering the cell decomposition of the universal cover
$\rr^2$ of the torus. Also, a free resolution of $\zz$ over $\zz\zz$
is given by
\begin{equation}\label{eq:resZ}
\xymatrix{
0 \ar[r] & \zz\zz \ar[r]^-{t-1} & \zz\zz \ar[r]^-{\varepsilon} & \zz \ar[r] & 0,
} 
\end{equation}
as is shown, for example, in~\cite{Brown82}. If we can find the
homomorphisms of Lemma~\ref{lemma:1wall}, then we'll have our desired
free resolution of $\zz$ over $\zz G$.

From now on, let
\[
\begin{bmatrix}
m_1 & m_2 \\
n_1 & n_2
\end{bmatrix} = 
-\theta^{-1} = 
\frac{1}{\alpha\delta - \beta\gamma}
\begin{bmatrix}
-\delta & \gamma \\
\beta & -\alpha
\end{bmatrix},
\]
and let $A$, $B$, $C$, $D$, $E\in \zz G$ be the elements given by
\begin{align}\label{eq:A}
A &= \begin{cases}
(1,1) + \ds\sum_{k=1}^{m_1} (a^{-k\alpha}b^{-k\beta},t), & \text{ if $m_1 > 0$,}\\
(1,1) - \ds\sum_{k=0}^{-m_1-1}(a^{k\alpha}b^{k\beta},t), & \text{ if $m_1 < 0$,}\\
(1,1), & \text{ if $m_1 = 0$,}
\end{cases}
\end{align}

\begin{align}\label{eq:B}
B &= \begin{cases}
\ds\sum_{k=0}^{n_1-1} (a^{1+k\gamma}b^{k\delta},t), & \text{ if $n_1 > 0$,}\\
-\ds\sum_{k=1}^{-n_1}(a^{1-k\gamma}b^{-k\delta},t), & \text{ if $n_1 < 0$,}\\
0, & \text{ if $n_1 = 0$,}
\end{cases}
\end{align}

\begin{align}\label{eq:C}
C &= \begin{cases}
\ds\sum_{k=1}^{m_2} (a^{-k\alpha}b^{-k\beta},t), & \text{ if $m_2 > 0$,}\\
-\ds\sum_{k=0}^{-m_2-1}(a^{k\alpha}b^{k\beta},t), & \text{ if $m_2 < 0$,}\\
0, & \text{ if $m_2 = 0$,}
\end{cases}
\end{align}

\begin{align}\label{eq:D}
D &= \begin{cases}
(1,1) + \ds\sum_{k=0}^{n_2-1} (a^{k\gamma}b^{1+k\delta},t), & \text{ if $n_2 > 0$,}\\
(1,1) - \ds\sum_{k=1}^{-n_2}(a^{-k\gamma}b^{1-k\delta},t), & \text{ if $n_2 < 0$,}\\
(1,1), & \text{ if $n_2=0$,}
\end{cases}
\end{align}

\begin{equation}\label{eq:E}
E = -(1,1) + \sum_{(m,n)\in I_1\times J_1} (a^{\alpha m + \gamma n}b^{\beta m + \delta n},t) - \sum_{(m,n)\in I_2\times J_2} (a^{\alpha m + \gamma n}b^{\beta m + \delta n},t),
\end{equation}
where the sets $I_1$, $I_2$, $J_1$, $J_2$ vary depending on the signs
of $m_1$, $n_1$, $m_2$ and $n_2$ and are completely described in
Table~\ref{table:ij}.

\begin{theorem}\label{theorem:resolution}
Let
\begin{align*}
P_0 &= \gen{x} \cong \zz G,\\
P_1 &= \gen{y_1}\oplus\gen{y_2}\oplus\gen{y_3} \cong \zz G \oplus \zz G \oplus \zz G,\\
P_2 &= \gen{z_1}\oplus\gen{z_2}\oplus\gen{z_3} \cong \zz G \oplus \zz G \oplus \zz G,\\
P_3 &= \gen{w} \cong \zz G.
\end{align*}
The sequence
\begin{equation}\label{eq:resGZ}
\xymatrix{
0 \ar[r] & P_3 \ar[r]^-{\partial_3} & P_2 \ar[r]^-{\partial_2} & P_1 \ar[r]^-{\partial_1} & P_0 \ar[r]^-{\varepsilon_0} & {\zz} \ar[r] & 0
}
\end{equation}
is a free resolution of $\zz$ over $\zz G$, where the maps
$\partial_i$ ($i=1,2,3$) and $\varepsilon_0$ are defined by
\[
\begin{array}{l}
\varepsilon_0(x) = 1,\\
\\
\partial_1(y_1) = [(a,1)-(1,1)]x,\\
\partial_1(y_2) = [(b,1)-(1,1)]x,\\
\partial_1(y_3) = [(1,t)-(1,1)]x,\\
\\
\partial_2(z_1) = Ay_1 + By_2 + [(a,1)-(1,1)]y_3, \\
\partial_2(z_2) = Cy_1 + Dy_2 + [(b,1)-(1,1)]y_3, \\
\partial_2(z_3) = [(1,1)-(b,1)]y_1 + [(a,1)-(1,1)]y_2, \\
\\
\partial_3(w) = [(1,1)-(b,1)]z_1 + [(a,1)-(1,1)]z_2 + Ez_3.
\end{array}
\]
\end{theorem}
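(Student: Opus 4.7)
The approach is to apply Wall's Theorem~\ref{theorem:1wall} to the extension $1\to\zz\oplus\zz\to G\to\zz\to 1$, using (\ref{eq:resZZ}) as $B$ and (\ref{eq:resZ}) as $C$. Since $B$ has $\zz[\zz\oplus\zz]$-ranks $1,2,1$ and $C$ has $\zz\zz$-ranks $1,1$, the bigraded modules $A_{r,s}$ are nonzero only for $r\in\{0,1,2\}$, $s\in\{0,1\}$, and the total complex $A_n=\bigoplus_{r+s=n}A_{r,s}$ has $\zz G$-ranks $1,3,3,1$, matching $P_0,\dots,P_3$. I identify $x=e_0\otimes c_0$, $y_1=e_1^1\otimes c_0$, $y_2=e_1^2\otimes c_0$, $y_3=e_0\otimes c_1$, $z_1=e_1^1\otimes c_1$, $z_2=e_1^2\otimes c_1$, $z_3=e_2\otimes c_0$, and $w=e_2\otimes c_1$.

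Because $C$ has length $1$, only $d_0$ and $d_1$ of Lemma~\ref{lemma:1wall} are nonzero, and the cocycle relation reduces to $d_0d_1+d_1d_0=0$. The ``horizontal'' $d_0$ is inherited from (\ref{eq:homsZZ}) and accounts for every summand of $\partial_\ast$ not involving $A,B,C,D,E$: $\partial_1 y_1$, $\partial_1 y_2$, the $[(a,1)-(1,1)]y_3$ and $[(b,1)-(1,1)]y_3$ terms in $\partial_2$, the entire $\partial_2 z_3$, and the $(1-b)z_1+(a-1)z_2$ portion of $\partial_3 w$. The ``vertical'' $d_1$ is forced by Lemma~\ref{lemma:1wall}(i) to satisfy $d_1 y_3=[(1,t)-(1,1)]x$, and its remaining values $d_1 z_1=Ay_1+By_2$, $d_1 z_2=Cy_1+Dy_2$, and $d_1 w=Ez_3$ are constrained by the cocycle identity applied to $z_1,z_2,w$.

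Applying $d_0d_1+d_1d_0=0$ to $z_1$ and using $\zz G$-linearity reduces to the $\zz G$-equation
\[
A\bigl[(a,1)-(1,1)\bigr]+B\bigl[(b,1)-(1,1)\bigr]
=\bigl[(1,1)-(a,1)\bigr]\bigl[(1,t)-(1,1)\bigr].
\]
Expanding the right-hand side via the semidirect-product identity $(a,1)(1,t)=(1,t)(\theta^{-1}(a),1)=(1,t)(a^{-m_1}b^{-n_1},1)$, and splitting $a^{-m_1}b^{-n_1}-1$ as $(a^{-m_1}-1)b^{-n_1}+(b^{-n_1}-1)$, one solves for $A$ and $B$ using finite geometric sums. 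The sign cases in (\ref{eq:A})--(\ref{eq:B}) reflect the fact that $a^m-1=(a-1)(1+a+\cdots+a^{m-1})$ for $m>0$ while $a^m-1=-(a-1)(a^{-1}+\cdots+a^m)$ for $m<0$, and similarly for powers of $b$. The analogous cocycle identity on $z_2$ produces $C$ and $D$ as in (\ref{eq:C})--(\ref{eq:D}).

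The main obstacle will be enforcing $d_0d_1+d_1d_0=0$ on $w$, which couples $E$ with the already-determined $A,B,C,D$: one must compare $d_0(Ez_3)=E\cdot[(1-b)y_1+(a-1)y_2]$ with $d_1((1-b)z_1+(a-1)z_2)=(1-b)(Ay_1+By_2)+(a-1)(Cy_1+Dy_2)$, tracking every semidirect-product product in $\zz G$. Re-indexing the resulting double sums in the $y_1$- and $y_2$-components forces formula (\ref{eq:E}), with the four partitions of $I_1\times J_1$ and $I_2\times J_2$ in Table~\ref{table:ij} corresponding to the four sign configurations of $(m_1,n_1,m_2,n_2)$. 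Once all these identities are verified, Theorem~\ref{theorem:1wall} concludes immediately that $P_\ast$ is a free resolution of $\zz$ over $\zz G$.
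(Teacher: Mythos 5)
Your proof follows the same architecture as the paper's: Wall's construction applied to $1\to\zz\oplus\zz\to G\to\zz\to 1$, the same identification of the generators $x,y_i,z_i,w$ with the bigraded pieces $A_{r,s}$, and the determination of $A,B,C,D,E$ from $d_0d_1+d_1d_0=0$ evaluated on $z_1$, $z_2$ and $w$. The difference is only in how you solve those identities (telescoping with geometric sums versus the paper's lattice bookkeeping for the unknown integer coefficients $g_{mn},h_{mn}$), and there your sketch has two problems. The first is concrete: the splitting $a^{-m_1}b^{-n_1}-1=(a^{-m_1}-1)b^{-n_1}+(b^{-n_1}-1)$ does \emph{not} yield (\ref{eq:A})--(\ref{eq:B}). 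Since $(1,t)-(a,t)=(1,t)\bigl[(1,1)-(a^{-m_1}b^{-n_1},1)\bigr]$, your splitting gives, for $n_1>0$, $B=\sum_{k=1}^{n_1}(1,t)(b^{-k},1)=\sum_{k=1}^{n_1}(a^{-k\gamma}b^{-k\delta},t)$, whereas (\ref{eq:B}) is $\sum_{k=0}^{n_1-1}(a^{1+k\gamma}b^{k\delta},t)$; already for $\alpha=\beta=\delta=1$, $\gamma=0$ these are $(b^{-1},t)$ and $(a,t)$ respectively. Your pair $(A,B)$ does solve (\ref{eq:AB}) --- it differs from the paper's by a left multiple of the syzygy $\bigl((1,1)-(b,1),\,(a,1)-(1,1)\bigr)$ --- but the theorem asserts that the \emph{specific} elements (\ref{eq:A})--(\ref{eq:E}) give a resolution. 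The splitting that reproduces them is $(a^{-m_1}-1)+a^{-m_1}(b^{-n_1}-1)$, combined with the relations $-m_1\alpha=1+n_1\gamma$ and $-m_1\beta=n_1\delta$.

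The second problem is the step you dismiss as ``re-indexing the resulting double sums'': this is where almost all of the paper's proof lives. The condition on $w$ is the overdetermined system (\ref{eq:E0}) --- two equations in the single unknown $E$ --- and the content of the argument is (i) that the two right-hand sides are compatible, so a common solution exists, and (ii) that this solution is the signed sum over the two lattice rectangles $I_1\times J_1$ and $I_2\times J_2$, whose description occupies all of Table~\ref{table:ij} and depends on the sign pattern of $(m_1,n_1,m_2,n_2)$ in many cases, not four. Telescoping solves each equation of (\ref{eq:E0}) separately but gives no reason for the two answers to agree; the paper's two-dimensional graph argument is precisely the device that produces a simultaneous solution and identifies it with (\ref{eq:E}). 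Moreover, if you keep your own $A,B,C,D$, the $E$ solving (\ref{eq:E0}) changes accordingly and will not be the element (\ref{eq:E}). So the proposal needs the corrected splitting and an actual consistency argument at the $E$ stage before it establishes the theorem as stated.
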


\begin{proof}
Applying the ideas of Wall~\cite{Wall61}, the modules $A_{r,s}$ are such that $A_{r,s}\ne 0$ only for $0\le r\le 2$ and $0\le s \le 1$, and
\[
\begin{array}{l}
A_{2,1} \cong A_{2,0} \cong A_{0,1} \cong A_{0,0} \cong \zz G,\\
A_{1,1} \cong A_{1,0} \cong \zz G\oplus \zz G.
\end{array}
\]
Using Lemma~\ref{lemma:1wall}, we get the diagram
\begin{equation}\label{eq:diagramwall}
\xymatrix{
& \gen{w}\ar@{=}[d] & \gen{z_1}\oplus\gen{z_2} \ar@{=}[d] & \gen{y_3} \ar@{=}[d] & \\
0 \ar[r] &\zz G \ar[r]^-{d_0}\ar[d]^-{d_1} & \zz G\oplus\zz G \ar[r]^-{d_0}\ar[d]^-{d_1} & \zz G \ar[r]^-{\varepsilon_1}\ar[d]^-{d_1} & \zz\zz\phantom{,} 
\ar[d]^-{t-1}\\
0 \ar[r] &\zz G \ar[r]^-{d_0}\ar@{=}[d] & \zz G\oplus\zz G \ar[r]^-{d_0}\ar@{=}[d] & \zz G \ar[r]^-{\varepsilon_0}\ar@{=}[d] & \zz\zz,\\
& \gen{z_3} & \gen{y_1}\oplus\gen{y_2} & \gen{x} &
}
\end{equation}
in which we must determine the maps $\varepsilon_0$, $\varepsilon_1$,
$d_0$ and $d_1$. Having done that, Theorem~\ref{theorem:1wall} will
give us the desired resolution. The homomorphisms $\varepsilon_0$,
$\varepsilon_1$ and $d_0$ are easily calculated:
\begin{align*}
\varepsilon_0 \colon &\gen{x} \to \zz\zz\\
&\varepsilon_0((a^mb^n, t^k)x) = t^k,\\
\varepsilon_1 \colon &\gen{y_3} \to \zz\zz\\
&\varepsilon_1((a^mb^n, t^k)y_3) = t^k,\\
d_0 \colon &\gen{w} \to \gen{z_1}\oplus\gen{z_2}\\
&d_0(w) = ((1,1)-(b,1))z_1 + ((a,1)-(1,1))z_2,\\
d_0 \colon &\gen{z_3} \to \gen{y_1}\oplus\gen{y_2}\\
&d_0(z_3) = ((1,1)-(b,1))y_1 + ((a,1)-(1,1))y_2,\\
d_0 \colon &\gen{z_1}\oplus\gen{z_2} \to \gen{y_3}\\
&d_0(z_1) = ((a,1)-(1,1))y_3,\\
&d_0(z_2) = ((b,1)-(1,1))y_3,\\
d_0 \colon &\gen{y_1}\oplus\gen{y_2} \to \gen{x}\\
&d_0(y_1) = ((a,1)-(1,1))x,\\
&d_0(y_2) = ((b,1)-(1,1))x.\\
\end{align*}
The three maps $d_1$ are computed as follows: the map
$d_1:\gen{y_3}\to \gen{x}$ must satisfy $\varepsilon_0\circ d_1 =
(t-1)\circ\varepsilon_1$, and that happens if we define $d_1(y_3) =
\bigl((1,t)-(1,1)\bigr)x$. For the map $d_1 \colon
\gen{z_1}\oplus\gen{z_2} \to \gen{y_1}\oplus\gen{y_2}$, consider the
diagram
\[
\xymatrix{
\gen{z_1}\oplus\gen{z_2} \ar[r]^-{d_0}\ar[d]_-{d_1} & \gen{y_3}\phantom{,} \ar[d]^-{d_1} \\
\gen{y_1}\oplus\gen{y_2} \ar[r]_-{d_0} & \gen{x},
}
\]
where $d_0d_1+d_1d_0 = 0$. If we write $d_1(z_1) = Ay_1+By_2$ and
$d_1(z_2) = Cy_1+Dy_2$ for some $A$, $B$, $C$, $D\in \zz G$, then
\begin{equation}\label{eq:AB}
(d_0d_1+d_1d_0)(z_1) = 0 \iff A((a,1)-(1,1)) + B((b,1)-(1,1)) = (1,t)-(1,1)+(a,1)-(a,t).
\end{equation}
In order to solve~(\ref{eq:AB}), we write
\begin{align*}
A &= (1,1) + \ds\sum_{m,n\in\zz} g_{mn}(a^mb^n,t),\\
B &= \ds\sum_{m,n\in\zz} h_{mn}(a^mb^n,t),
\end{align*}
so~(\ref{eq:AB}) becomes
\begin{equation}\label{eq:AB2}
\ds\sum_{m,n\in\zz} \left(g_{(m-\alpha)(n-\beta)} - g_{mn} + h_{(m-\gamma)(n-\delta)} - h_{mn}\right) (a^mb^n,t) = (1,t)-(a,t).
\end{equation}
We define a directed graph to solve~(\ref{eq:AB2}) in the following
way: the vertices of the graph are the points of $\zz\times\zz$, and
each vertex $(m,n)$ is the origin of two edges, one going to
$(m-\alpha,n-\beta)$ and the other to $(m-\gamma,n-\delta)$. Given
that $\det\theta=\pm 1$, we can draw the graph as a square grid graph
with horizontal edges going left and vertical edges going down,
where each horizontal edge goes from $(m,n)$ to $(m-\alpha,n-\beta)$
and each vertical edge goes from $(m,n)$ to $(m-\gamma, n-\delta)$. On
each vertex $(m,n)$, we put two labels (yet to be determined): one
with the value of $g_{mn}$ and another with the value of
$h_{mn}$. Then, on each horizontal edge we put a label with the value
of $g_{(m-\alpha)(n-\beta)} - g_{mn}$, and on each vertical edge we
put a label with the value of $h_{(m-\gamma)(n-\delta)} -
h_{mn}$. Observing that we can write~(\ref{eq:AB2}) as
\begin{equation}\label{eq:AB3}
g_{(m-\alpha)(n-\beta)} - g_{mn} + h_{(m-\gamma)(n-\delta)} - h_{mn} =
\begin{cases}
1, & \text{if $(m,n)=(0,0)$,}\\
-1, & \text{if $(m,n) = (1,0)$,}\\
0, & \text{otherwise,}
\end{cases}
\end{equation}
we use the graph we just defined (except for labels) to solve it in
the following way: first, we find a path (in the undirected graph
obtained from the one we defined) between the vertices $(0,0)$ and
$(1,0)$. In order to do that, let $m_1$ and $n_1$ be integers such
that
\[
\left|
\begin{array}{l}
-\alpha m_1 - \gamma n_1 = 1\\
-\beta m_1 - \delta n_1 = 0.
\end{array}
\right.
\]
Hence we can go from $(0,0)$ to $(1,0)$ passing through $|m_1|$
horizontal edges and followed by $|n_1|$ vertical edges. Just to give
a concrete case, suppose $m_1<0$ and $n_1>0$. Then our path looks like
\begin{center}
\psset{unit=1cm}
\begin{pspicture}(-0.5,-0.5)(4.5,4.5)
\psdots[dotscale=0.6](0,4)(1,4)(2,4)(3,4)(4,4)(4,0)(4,1)(4,2)(4,3)
\psline{->}(0.9,4)(0.1,4)
\psline{->}(1.9,4)(1.1,4)
\psline{->}(3.9,4)(3.1,4)
\psline{->}(4,3.9)(4,3.1)
\psline{->}(4,1.9)(4,1.1)
\psline{->}(4,0.9)(4,0.1)
\psline[linestyle=dotted](2.9,4)(2.1,4)
\psline[linestyle=dotted](4,2.9)(4,2.1)
\end{pspicture}
\end{center}
where the top left vertex is $(0,0)$ and the bottom right vertex is
$(1,0)$. From~(\ref{eq:AB3}), we know that the sum of the labes of the
edges originated at $(0,0)$ must be $1$, the sum of the labels of the
edges originated at $(1,0)$ must be $-1$, and the sum of the labels
originated from any other vertex must be zero. This can be
accomplished with the following labels on the vertices:
\begin{center}
\psset{unit=1cm}
\begin{pspicture}(-0.5,-0.5)(4.5,4.5)
\psdots[dotscale=0.6](0,4)(1,4)(2,4)(3,4)(4,4)(4,0)(4,1)(4,2)(4,3)
\psline{->}(0.9,4)(0.1,4)
\psline{->}(1.9,4)(1.1,4)
\psline{->}(3.9,4)(3.1,4)
\psline{->}(4,3.9)(4,3.1)
\psline{->}(4,1.9)(4,1.1)
\psline{->}(4,0.9)(4,0.1)
\psline[linestyle=dotted](2.9,4)(2.1,4)
\psline[linestyle=dotted](4,2.9)(4,2.1)
\uput{0.05cm}[90](0,4){$\scriptstyle -10$}
\uput{0.05cm}[90](1,4){$\scriptstyle -10$}
\uput{0.05cm}[90](2,4){$\scriptstyle -10$}
\uput{0.05cm}[90](3,4){$\scriptstyle -10$}
\uput{0.07cm}[45](4,4){$\scriptstyle 00$}
\uput{0.06cm}[0](4,0){$\scriptstyle 01$}
\uput{0.06cm}[0](4,1){$\scriptstyle 01$}
\uput{0.06cm}[0](4,2){$\scriptstyle 01$}
\uput{0.06cm}[0](4,3){$\scriptstyle 01$}
\end{pspicture}
\end{center}
For each vertex, the left label is $g_{mn}$ and the right one is
$h_{mn}$. For all the vertices that were not drawn, we have $g_{mn} =
h_{mn} = 0$. From the labels above we read the values of $A$ and $B$:
\begin{align*}
A &= (1,1) - \ds\sum_{k=0}^{m_1-1} (a^{k\alpha}b^{k\beta},t),\\
B &= \ds\sum_{k=0}^{n_1-1}(a^{1+k\gamma}b^{k\delta},t).
\end{align*}
Given any $\theta$, we can compute explicitly the values of $m_1$,
$n_1$, and then discover the values of $A$ and $B$. The same can be
done with the elements $C$ and $D$, and the values we find out for
these four variables are the ones given in the statement of the
theorem.

Finally, it remains to compute the map $d_1 \colon \gen{w} \to
\gen{z_3}$ in the diagram
\[
\xymatrix{
\gen{w} \ar[r]^-{d_0}\ar[d]_-{d_1} &
\gen{z_1}\oplus\gen{z_2} \ar[d]^-{d_1}\\
\gen{z_3} \ar[r]_-{d_0} & \gen{y_1}\oplus\gen{y_2}.
}
\]
If $d_1(w) = Ez_3$ for some $E\in \zz G$, then $(d_0d_1 + d_1d_0)(w) =
0$ is equivalent to
\begin{equation}\label{eq:E0}
\left|
\begin{array}{l}
E((1,1)-(b,1)) = ((b,1)-(1,1))A + ((1,1)-(a,1))C\\
E((a,1)-(1,1)) = ((b,1)-(1,1))B + ((1,1)-(a,1))D
\end{array}
\right..
\end{equation}
Given the expressions, for $A$, $B$, $C$ and $D$, we guess that $E$
can be written in the form
\[
E = -(1,1) + \sum_{m,n\in\zz} h_{mn}(a^mb^n,t)
\]
for some integers $h_{mn}$ yet to be determined. Now the trick to
compute the integers $h_{mn}$ is similar to the one we used to find
the elements $A$, $B$, $C$ and $D$. In order to show the computations
in an actual case, let's suppose that $m_1<0$, $n_1>0$, $m_2>0$ and
$n_2<0$ (this case arises when $\alpha$, $\beta$, $\gamma$ and
$\delta$ are positive and $\det\theta=1$). In this case we can
write~(\ref{eq:E0}) as
\begin{gather}
\ds ((b,1)-(1,1)) + \sum_{m,n\in\zz}(h_{mn}-h_{(m-\gamma)(n-\delta)})(a^mb^n,t) = ((b,1)-(1,1)) + \notag\\
+\sum_{k=0}^{-m_1-1}\left[(a^{k\alpha}b^{k\beta},t)-(a^{k\alpha}b^{1+k\beta},t)\right] + \sum_{k=1}^{m_2}\left[(a^{-k\alpha}b^{-k\beta},t) - (a^{1-k\alpha}b^{-k\beta},t)\right] \label{eq:E1}
\end{gather}
and
\begin{gather}
\ds ((1,1)-(a,1)) + \sum_{m,n\in\zz}(h_{(m-\alpha)(n-\beta)}-h_{mn})(a^mb^n,t) = ((1,1)-(a,1)) + \notag\\
+\sum_{k=0}^{n_1-1}\left[(a^{1+k\gamma}b^{1+k\delta},t)-(a^{1+k\gamma}b^{k\delta},t)\right] - \sum_{k=1}^{-n_2}\left[(a^{-k\gamma}b^{1-k\delta},t)-(a^{1-k\gamma}b^{1-k\delta},t)\right]. \label{eq:E2}
\end{gather}
Once again we construct a directed graph with the points of
$\zz\times\zz$ as vertices. Each vertex $(m,n)$ is the origin of two
edges, one going to $(m-\alpha,n-\beta)$ and another going to
$(m+\gamma, n+\delta)$. On the edge with origin at $(m,n)$ and going
to $(m-\alpha,n-\beta)$, we put a label with the value of
$h_{(m-\alpha)(n-\beta)}-h_{mn}$, and on the edge with origin at
$(m-\gamma,n-\delta)$ and going to $(m,n)$, we put a label with the
value of $h_{mn}-h_{(m-\gamma)(n-\delta)}$. Those values are available
to us and are given by the equations~(\ref{eq:E1})
and~(\ref{eq:E2}). Our task then consists in putting a label with the
value of $h_{mn}$ on each vertex $(m,n)$ in such a way that it is
consistent with the labels of the edges of the graph. In order to do
so, it is more convenient to draw the graph with horizontal and
vertical edges, like we did before, which can be done applying the
linear map $T\colon \zz\times\zz \to \zz\times\zz$ given by
\[
T(m,n) = \theta^{-1}\begin{bmatrix}
m\\n
\end{bmatrix}
=
-\begin{bmatrix}
m_1 & m_2\\
n_1 & n_2
\end{bmatrix}
\begin{bmatrix}
m\\n
\end{bmatrix} =
(-m_1m-m_2n, -n_1m-n_2n).
\]
to the vertices of our graph. Then, in the case we are considering,
our graph looks like this:

\bigskip
\begin{center}
\psset{unit=0.6cm}
\begin{pspicture}(0,0)(16,14)
\psframe[linewidth=0, fillstyle=solid, fillcolor=lightgray](3,7)(9,11)
\psframe[linewidth=0, fillstyle=solid, fillcolor=lightgray](10,2)(14,6)
\rput(6,9){$I_1\times J_1$}
\rput(12,4){$I_2\times J_2$}
\psdots[dotscale=0.6](2,7)(2,8)(2,10)(2,11)
\psdots[dotscale=0.6](3,6)(3,7)(3,8)(3,10)(3,11)(3,12)
\psdots[dotscale=0.6](4,6)(4,7)(4,11)(4,12)
\psdots[dotscale=0.6](8,6)(8,7)(8,11)(8,12)
\psdots[dotscale=0.6](9,2)(9,3)(9,5)(9,6)(9,7)(9,8)(9,10)(9,11)(9,12)
\psdots[dotscale=0.6](10,1)(10,2)(10,3)(10,5)(10,6)(10,7)(10,8)(10,10)(10,11)
\psdots[dotscale=0.6](11,1)(11,2)(11,6)(11,7)
\psdots[dotscale=0.6](13,1)(13,2)(13,6)(13,7)
\psdots[dotscale=0.6](14,1)(14,2)(14,3)(14,5)(14,6)(14,7)
\psdots[dotscale=0.6](15,2)(15,3)(15,5)(15,6)
\psset{linestyle=dotted}
\psline(2,0)(2,14)
\psline(3,0)(3,14) \uput{0.05cm}[45](3,14){$\scriptstyle -m_2$}
\psline(9,0)(9,14)
\psline(10,0)(10,14) \uput{0.05cm}[45](10,14){$\scriptstyle -m_1-m_2$}
\psline(14,0)(14,14)
\psline(15,0)(15,14) \uput{0.05cm}[45](15,14){$\scriptstyle -m_1$}
\psline(0,1)(16,1)
\psline(0,2)(16,2) \uput{0.05cm}[135](0,2){$\scriptstyle -n_1$}
\psline(0,6)(16,6)
\psline(0,7)(16,7) \uput{0.05cm}[135](0,7){$\scriptstyle 0$}
\psline(0,11)(16,11)
\psline(0,12)(16,12) \uput{0.05cm}[135](0,12){$\scriptstyle -n_2$}
\psset{linestyle=solid}
\psline{->}(2.9,7)(2.1,7) \uput{0.05cm}[90](2.5,7){$\scriptstyle -1$}
\psline{->}(2.9,8)(2.1,8) \uput{0.05cm}[90](2.5,8){$\scriptstyle -1$}
\psline{->}(2.9,10)(2.1,10) \uput{0.05cm}[-90](2.5,10){$\scriptstyle -1$}
\psline{->}(2.9,11)(2.1,11) \uput{0.05cm}[-90](2.5,11){$\scriptstyle -1$}
\psdots[dotscale=0.4](2.5,8.8)(2.5,9)(2.5,9.2)
\psline{->}(3,11.1)(3,11.9) \uput{0.05cm}[0](3,11.5){$\scriptstyle -1$}
\psline{->}(4,11.1)(4,11.9) \uput{0.05cm}[0](4,11.5){$\scriptstyle -1$}
\psline{->}(8,11.1)(8,11.9) \uput{0.05cm}[180](8,11.5){$\scriptstyle -1$}
\psline{->}(9,11.1)(9,11.9) \uput{0.05cm}[180](9,11.5){$\scriptstyle -1$}
\psdots[dotscale=0.4](5.8,11.5)(6,11.5)(6.2,11.5)
\psline{->}(3,6.1)(3,6.9) \uput{0.05cm}[0](3,6.5){$\scriptstyle 1$}
\psline{->}(4,6.1)(4,6.9) \uput{0.05cm}[0](4,6.5){$\scriptstyle 1$}
\psline{->}(8,6.1)(8,6.9) \uput{0.05cm}[180](8,6.5){$\scriptstyle 1$}
\psline{->}(9,6.1)(9,6.9) \uput{0.05cm}[180](9,6.5){$\scriptstyle 1$}
\psdots[dotscale=0.4](5.8,6.5)(6,6.5)(6.2,6.5)
\psline{->}(10,6.1)(10,6.9) \uput{0.05cm}[0](10,6.5){$\scriptstyle 1$}
\psline{->}(11,6.1)(11,6.9) \uput{0.05cm}[0](11,6.5){$\scriptstyle 1$}
\psline{->}(13,6.1)(13,6.9) \uput{0.05cm}[180](13,6.5){$\scriptstyle 1$}
\psline{->}(14,6.1)(14,6.9) \uput{0.05cm}[180](14,6.5){$\scriptstyle 1$}
\psdots[dotscale=0.4](11.8,6.5)(12,6.5)(12.2,6.5)
\psline{->}(9.9,11)(9.1,11) \uput{0.05cm}[-90](9.5,11){$\scriptstyle 1$}
\psline{->}(9.9,10)(9.1,10) \uput{0.05cm}[-90](9.5,10){$\scriptstyle 1$}
\psline{->}(9.9,8)(9.1,8) \uput{0.05cm}[90](9.5,8){$\scriptstyle 1$}
\psline{->}(9.9,7)(9.1,7) \uput{0.05cm}[90](9.5,7){$\scriptstyle 1$}
\psdots[dotscale=0.4](9.5,8.8)(9.5,9)(9.5,9.2)
\psline{->}(9.9,6)(9.1,6) \uput{0.05cm}[-90](9.5,6){$\scriptstyle 1$}
\psline{->}(9.9,5)(9.1,5) \uput{0.05cm}[-90](9.5,5){$\scriptstyle 1$}
\psline{->}(9.9,3)(9.1,3) \uput{0.05cm}[90](9.5,3){$\scriptstyle 1$}
\psline{->}(9.9,2)(9.1,2) \uput{0.05cm}[90](9.5,2){$\scriptstyle 1$}
\psdots[dotscale=0.4](9.5,3.8)(9.5,4)(9.5,4.2)
\psline{->}(14.9,6)(14.1,6) \uput{0.05cm}[-90](14.5,6){$\scriptstyle -1$}
\psline{->}(14.9,5)(14.1,5) \uput{0.05cm}[-90](14.5,5){$\scriptstyle -1$}
\psline{->}(14.9,3)(14.1,3) \uput{0.05cm}[90](14.5,3){$\scriptstyle -1$}
\psline{->}(14.9,2)(14.1,2) \uput{0.05cm}[90](14.5,2){$\scriptstyle -1$}
\psdots[dotscale=0.4](14.5,3.8)(14.5,4)(14.5,4.2)
\psline{->}(10,1.1)(10,1.9) \uput{0.05cm}[0](10,1.5){$\scriptstyle -1$}
\psline{->}(11,1.1)(11,1.9) \uput{0.05cm}[0](11,1.5){$\scriptstyle -1$}
\psline{->}(13,1.1)(13,1.9) \uput{0.05cm}[180](13,1.5){$\scriptstyle -1$}
\psline{->}(14,1.1)(14,1.9) \uput{0.05cm}[180](14,1.5){$\scriptstyle -1$}
\psdots[dotscale=0.4](11.8,1.5)(12,1.5)(12.2,1.5)
\end{pspicture}
\end{center}
In the picture above, we did not draw edges with null labels. From the
graph above, define the sets $I_1$, $J_1$, $I_2$, $J_2\subset \zz$ by
\begin{align*}
I_1 &= [-m_2, -m_1-m_2-1]\cap\zz,\\
J_1 &= [0,-n_2-1]\cap\zz,\\
I_2 &= [-m_1-m_2,-m_1-1]\cap\zz,\\
J_2 &= [-n_1,-1]\cap\zz.
\end{align*}
If we put the label $1$ on the vertices of $I_1\times J_1$, the label
$-1$ on the vertices $(m,n)\in I_2\times J_2$ and the label $0$
otherwise, then those labels will be consistent with the ones on the
edges. Applying now the map $T^{-1}$, we obtain
\begin{equation*}
E = -(1,1) + \sum_{(m,n)\in I_1\times J_1} (a^{\alpha m + \gamma n}b^{\beta m + \delta n},t) - \sum_{(m,n)\in I_2\times J_2} (a^{\alpha m + \gamma n}b^{\beta m + \delta n},t).
\end{equation*}
The same construction can be done for all the other cases, and in all
of them the element $E$ is given by~(\ref{eq:E}), for varying sets
$I_1$, $J_1$, $I_2$, $J_2$. A complete description of those sets is
given in Table~\ref{table:ij}.  Now that we have computed all the maps
in the diagram~(\ref{eq:diagramwall}), Theorem~\ref{theorem:1wall}
gives the free resolution of $\zz$ over $\zz G$.
\end{proof}

Given the previous theorem, we now proceed to compute the cohomology
groups $H^*(G,\zz)$ and $H^*(G,\zz_p)$ for $p$ prime.

\begin{theorem}\label{theorem:HZ}
The cohomology groups $H^*(G,\zz)$, where $\zz$ is the trivial $\zz
G$-module, are given by
\begin{align*}
H^0(G,\zz) &\cong \zz,\\
H^1(G,\zz) &\cong (\zz)^{3-\rank(\theta-I)},\\
H^2(G,\zz) &\cong \begin{cases}
\zz\oplus\zz\oplus\zz, & \text{ if $\rank(\theta-I)=0$,} \\
\zz_{\gcd(\beta,\gamma)}\oplus\zz\oplus\zz, & \text{ if $\rank(\theta-I)=1$ and $\det\theta=1$,} \\
\zz_{\gcd(\beta,\gamma,2)}\oplus\zz, & \text{ if $\rank(\theta-I)=1$ and $\det\theta=-1$,} \\
\zz_{c_1}\oplus\zz_{c_2}\oplus\zz, & \text{ if $\rank(\theta-I)=2$ and $\det\theta=1$,} \\
\zz_{c_1}\oplus\zz_{c_2}, & \text{ if $\rank(\theta-I)=2$ and $\det\theta=-1$,}
\end{cases} \\
H^3(G, \zz) &\cong
\begin{cases}
\zz, & \text{ if $\det\theta=1$},\\
\zz_2, & \text{ if $\det\theta=-1$},
\end{cases}\\
H^n(G,\zz) &\cong 0, \text{ if $n\ge 4$}.
\end{align*}
The positive integers $c_1$ and $c_2$ are such that $c_1 \mid c_2$,
$c_1c_2 = |\det(\theta-I)|$.
\end{theorem}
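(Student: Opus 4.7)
The plan is to apply $\Hom_{\zz G}(-,\zz)$ to the free resolution from Theorem~\ref{theorem:resolution} and compute the resulting cochain complex
\[
0 \to \zz \xrightarrow{\delta^0} \zz^3 \xrightarrow{\delta^1} \zz^3 \xrightarrow{\delta^2} \zz \to 0.
\]
Each coboundary $\delta^i$ is obtained by augmenting the differential $\partial_{i+1}$ (send every group element to $1$). Since every component of $\partial_1$ has the form $(g-1)x$ with $g\in G$, we get $\delta^0 = 0$, and since $P_n = 0$ for $n\ge 4$, we get $H^n(G,\zz)=0$ there. So the work reduces to describing $\bar\partial_2$ and $\bar\partial_3$.

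First I would compute the augmentations $\varepsilon(A) = 1+m_1$, $\varepsilon(B) = n_1$, $\varepsilon(C) = m_2$, $\varepsilon(D) = 1+n_2$ directly from~(\ref{eq:A})--(\ref{eq:D}) (the three sign cases all collapse to the same formula), so that the matrix of $\delta^1$ is the transpose of
\[
\bar M_2 = \begin{pmatrix} 1+m_1 & m_2 & 0 \\ n_1 & 1+n_2 & 0 \\ 0 & 0 & 0 \end{pmatrix}.
\]
The crucial identity is $\det\!\begin{pmatrix} 1+m_1 & m_2 \\ n_1 & 1+n_2 \end{pmatrix} = 1 - \tr(\theta^{-1}) + \det(\theta^{-1}) = \det(\theta-I)/\det\theta$, so the rank of the upper $2\times 2$ block equals $\rank(\theta-I)$ (and the block vanishes iff $\theta^{-1} = -I$, i.e.\ $\theta = I$). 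This immediately gives $H^0 \cong \zz$ and $H^1 \cong \ker\delta^1 \cong \zz^{3-\rank(\theta-I)}$. For $\delta^2$, I would add up the labels in Table~\ref{table:ij} (or directly in~(\ref{eq:E})) to obtain $\varepsilon(E) = -1 + m_1 n_2 - m_2 n_1 = -1 + \det(\theta^{-1}) = -1 + 1/\det\theta$, which is $0$ if $\det\theta = 1$ and $-2$ if $\det\theta = -1$; this yields $H^3 \cong \zz$ or $\zz_2$ accordingly.

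The only real work is $H^2$. Here $\ker\delta^2 = \zz^3$ when $\det\theta=1$ and $\ker\delta^2 = \zz\oplus\zz\oplus 0$ when $\det\theta = -1$, while $\text{im}\,\delta^1$ is contained in the first two coordinates and is spanned by the columns of $(\bar M_2)^T|_{2\times 2}$. I would compute the Smith normal form of that $2\times 2$ block: when $\rank(\theta-I) = 2$ the two invariant factors $c_1 \mid c_2$ satisfy $c_1 c_2 = |\det(\theta-I)|$, giving $\zz_{c_1}\oplus\zz_{c_2}$ in the block and then summing with $\zz$ or $0$ from the third coordinate according to $\det\theta$; when $\rank(\theta-I)=0$ the block contributes $\zz\oplus\zz$; and when $\rank(\theta-I)=1$ the block contributes $\zz_d\oplus\zz$ where $d = \gcd(1+m_1, n_1, m_2, 1+n_2)$.

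The hard part, and the place where the signed cases split, is identifying $d$ with $\gcd(\beta,\gamma)$ or $\gcd(\beta,\gamma,2)$. Here I would use the rank-one condition together with $\det\theta = \pm 1$: the equality $\det(\theta-I) = \det\theta - \tr\theta + 1 = 0$ forces $\alpha+\delta = 2$ when $\det\theta=1$ and $\alpha+\delta = 0$ when $\det\theta = -1$. Translating via $\theta = \det\theta\cdot\begin{pmatrix}-n_2&m_2\\n_1&-m_1\end{pmatrix}$, in the first case $1+m_1 = -(1+n_2) = -(1-\alpha)$, so $d = \gcd(1-\alpha,\beta,\gamma)$, and the identity $\beta\gamma = -(\alpha-1)^2$ (again from $\det(\theta-I)=0$) forces $\gcd(\beta,\gamma)\mid(\alpha-1)$, giving $d = \gcd(\beta,\gamma)$. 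In the second case $1+m_1 = 1-\alpha$ and $1+n_2 = 1+\alpha$, so $d = \gcd(1-\alpha, 1+\alpha, \beta, \gamma) = \gcd(2, 1+\alpha, \beta, \gamma)$, and a short parity check using $\beta\gamma = 1-\alpha^2$ shows this equals $\gcd(2,\beta,\gamma)$. Assembling the four subcases according to $(\rank(\theta-I), \det\theta)$ yields the stated description of $H^2(G,\zz)$.
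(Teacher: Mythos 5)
Your proposal is correct and follows the same overall route as the paper: dualize the resolution, observe $\partial_1^*=0$, read off $[\partial_2^*]$ from the augmentations $\varepsilon(A)=1+m_1$, $\varepsilon(B)=n_1$, $\varepsilon(C)=m_2$, $\varepsilon(D)=1+n_2$, read off $[\partial_3^*]$ from $\varepsilon(E)=-1+\det\theta$, and extract $H^2$ from the Smith normal form of $I-\theta^{-1}$. Where you genuinely diverge is the rank-one case of $H^2$. The paper parametrizes the rank-one matrix as $\begin{bmatrix} qr' & pr' \\ qs' & ps' \end{bmatrix}$, performs an explicit column operation to reach $\begin{bmatrix}\gcd(r',s') & 0\\ 0 & 0\end{bmatrix}$, and then identifies $\gcd(r',s')$ with $\gcd(\beta,\gamma)$ (resp.\ $\gcd(\beta,\gamma,2)$) through a case split on $p\ne 0$, $p=0$ with $\det\theta=1$, and $p=0$ with $\det\theta=-1$. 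You instead invoke the general fact that the first invariant factor of an integer matrix is the gcd of its entries, so the torsion is $\zz_d$ with $d=\gcd(1+m_1,n_1,m_2,1+n_2)$ outright, and then use $\tr\theta=1+\det\theta$ (forced by $\det(\theta-I)=0$) together with $\beta\gamma=-(\alpha-1)^2$, resp.\ $\beta\gamma=1-\alpha^2$, to collapse $d$ to $\gcd(\beta,\gamma)$, resp.\ $\gcd(\beta,\gamma,2)$, uniformly, with no subcases; this is cleaner and less error-prone than the paper's treatment, and it correctly absorbs the degenerate situations (e.g.\ $\gamma=0$) that the paper handles separately. The only blemish is the parenthetical ``the block vanishes iff $\theta^{-1}=-I$'': the block is $I-\theta^{-1}$, so it vanishes iff $\theta^{-1}=I$; the conclusion $\theta=I$, which is all you use, is correct.
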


\begin{proof}
Applying the functor $\Hom_{\zz G}(\underline{\phantom{M}}, \zz)$ to
the resolution given in Theorem~\ref{theorem:resolution}, we get
\[
\xymatrix@1@C=16pt{
{\phantom{.}}0\phantom{.} \ar[r] & \Hom_{\zz G}(P_0,\zz) \ar[r]^-{\partial_1^*} & \Hom_{\zz G}(P_1,\zz) \ar[r]^-{\partial_2^*} & \Hom_{\zz G}(P_2,\zz) \ar[r]^-{\partial_3^*} & \Hom_{\zz G}(P_3,\zz) \ar[r] & {\phantom{.}}0\phantom{.} \\
{\phantom{.}}0\phantom{.} \ar@{=}[u] \ar[r] & \zz\ar@{-}[u]|{\cong} \ar[r]^-{\partial_1^*} & \zz\oplus\zz\oplus\zz \ar@{-}[u]|{\cong} \ar[r]^-{\partial_2^*} & \zz\oplus\zz\oplus\zz \ar@{-}[u]|{\cong} \ar[r]^-{\partial_3^*} & \zz \ar@{-}[u]|{\cong} \ar[r] & {\phantom{.}}0. \ar@{=}[u]
}
\]
A quick computation shows that $\partial_1^* = 0$ and $H^0(G,\zz)
\cong \zz$, with a generator being given by $[x^*]$. If
$\varepsilon\colon \zz G\to \zz$ represents the augmentation map, then
the elements $A$, $B$, $C$ and $D$ of Theorem~\ref{theorem:resolution}
given by the equations (\ref{eq:A}), (\ref{eq:B}), (\ref{eq:C}) and
(\ref{eq:D}) are such that $\varepsilon(A) = 1+m_1$, $\varepsilon(B) =
n_1$, $\varepsilon(C) = m_2$ and $\varepsilon(D) = 1+n_2$. Hence the
matrix of $\partial_2^* \colon \zz\oplus\zz\oplus\zz \to
\zz\oplus\zz\oplus\zz$ relative to the dual bases of $\{y_1,y_2,y_3\}$
and $\{z_1,z_2,z_3\}$ is
\begin{equation}\label{eq:partial2*}
{[\partial_2^*]} = \begin{bmatrix}
1+m_1 & n_1 & 0 \\
m_2 & 1+n_2 & 0 \\
0 & 0 & 0
\end{bmatrix}.
\end{equation}
Then $H^1(G,\zz) = \dfrac{\ker \partial_2^*}{\im \partial_1^*} \cong
\ker \partial_2^* \cong (\zz)^{3 - \rank([\partial_2^*])} = (\zz)^{3 -
  \rank(\theta-I)}$, since $\rank(\theta-I) = \rank(I-\theta^{-1})$
and
\[
I-\theta^{-1} = \begin{bmatrix}
1+m_1 & m_2\\
n_1 & 1+n_2
\end{bmatrix}.
\]
We can also exhibit explicit generators for $H^1(G,\zz)$: if
$\rank(\theta-I) = 0$, then $H^1(G,\zz) =
\gen{[y_1^*],[y_2^*],[y_3^*]}$. If $\rank(\theta-I)$, then $H^1(G,\zz)
= \gen{[y_3^*]}$. Finally, if $\rank(\theta-I) = 1$, then one of the
generators of $H^1(G,\zz)$ is $[y_3^*]$, while the other generator is
obtained in the following way: if $(1+m_1) = n_1 = 0$, then the second
generator of $H^1(G,\zz)$ is given by
\[
\left[-\frac{1+n_2}{\gcd(m_2,1+n_2)}y_1^* + \frac{m_2}{\gcd(m_2,1+n_2)}y_2^*\right],
\]
and if $(1+m_1) \ne 0$ or $n_1 \ne 0$ we can take
\[
\left[-\frac{n_1}{\gcd(1+m_1,n_1)}y_1^* + \frac{1+m_1}{\gcd(1+m_1,n_1)}y_2^*\right]
\]
as the second generator of $H^1(G,\zz)$.

The matrix of $\partial_3^*$ relative to the dual bases
of $\{z_1,z_2,z_3\}$ and $\{w\}$ is
\begin{equation}\label{eq:partial3*}
[\partial_3^*] = \begin{bmatrix}
0 & 0 & (-1+\det\theta)
\end{bmatrix},
\end{equation}
since the element $E$ given by equation~(\ref{eq:E}) is such that
$\varepsilon(E) = -1 + |I_1\times J_1| - |I_2\times J_2|$ and the sets
$I_1$, $J_1$, $I_2$ and $J_2$ always satisfy (see
Table~\ref{table:ij})
\begin{equation}\label{eq:i1j1i2j2}
|(I_1\times J_1)| - |(I_2\times J_2)| = \det\theta = \pm 1.
\end{equation}
This implies
\[
H^3(G, \zz) \cong
\begin{cases}
\zz, & \text{ if $\det\theta=1$},\\
\zz_2, & \text{ if $\det\theta=-1$},
\end{cases}
\]
with $[w^*]$ a generator for $H^3(G, \zz)$. Finally we proceed to the
computation of $H^2(G,\zz)$. If $\det\theta=1$, then $z_3^* \in
\ker\partial_3^*$ and
\[
H^2(G, \zz) \cong \frac{\gen{z_1^*}\oplus \gen{z_2^*}}{\im \partial_2^*} \oplus \zz,
\]
whereas if $\det\theta = -1$ we have
\[
H^2(G, \zz) \cong \frac{\gen{z_1^*}\oplus \gen{z_2^*}}{\im \partial_2^*}.
\]
In both cases, the group structure of $\dfrac{\gen{z_1^*}\oplus
  \gen{z_2^*}}{\im \partial_2^*}$ can be obtained calculating the
Smith normal form of $(I-\theta^{-1})$. If $\rank(I-\theta^{-1}) = 0$,
then $\dfrac{\gen{z_1^*}\oplus \gen{z_2^*}}{\im \partial_2^*} =
\gen{[z_1^*]}\oplus\gen{[z_2^*]} \cong \zz\oplus\zz$. If
$\rank(I-\theta^{-1}) = 1$, then $(I-\theta^{-1})$ is a non-zero
matrix that can be written as
\[
\begin{bmatrix}
r & (p/q)r\\
s & (p/q)s
\end{bmatrix} \quad\text{ or }\quad
\begin{bmatrix}
(p/q)r & r \\
(p/q)s & s
\end{bmatrix},
\]
where $r$, $s$, $p$ and $q$ are integers such that $q\ne 0$ and
$\gcd(p,q)=1$. Since both cases are similar, we analyze the first
one. Writing $r=qr'$ and $s=qs'$, we have $I-\theta^{-1} = \begin{bmatrix}
qr' & pr'\\
qs' & ps'
\end{bmatrix}$.
If $k$, $\ell\in\zz$ are such that $pk+q\ell=1$, then
\[
(I-\theta^{-1})\begin{bmatrix}\ell & -p\\k & q \end{bmatrix} =
\begin{bmatrix}r' & 0\\ s' & 0\end{bmatrix},
\]
and the Smith normal form of $(I-\theta^{-1})$ is
$\begin{bmatrix}\gcd(r',s') & 0 \\ 0 & 0\end{bmatrix}$. Hence
  $\dfrac{\gen{z_1^*}\oplus \gen{z_2^*}}{\im \partial_2^*} \cong
  \zz_{\gcd(r',s')}\oplus\zz$. Now, if $p\ne 0$, $\gcd(r',s') =
  \gcd(pr',qs') = \gcd(m_2,n_1) = \gcd(\beta,\gamma)$. If $p=0$ and
  $\det\theta=1$, then $\alpha=\delta=1$ and $\gamma=0$, which implies
  $r'=0$ and $\gcd(r',s') = \gcd(0,\beta) = \beta =
  \gcd(\beta,\gamma)$. And, if $p=0$ and $\det\theta=-1$, then
  $\alpha=-1$, $\delta=1$, $\gamma=0$ and $r'=2$ (for in this case we
  take $q=1$), so $\gcd(r',s') = \gcd(\beta,2) =
  \gcd(\beta,\gamma,2)$. When $\rank(I-\theta^{-1}) = 1$ and
  $\det\theta=-1$, we make the extra observation that $\gcd(r',s') \in
  \{1,2\}$. This can be easily seen, as in this case we have
\[
\begin{bmatrix}
qr' & pr' \\
qs' & ps'
\end{bmatrix} = 
I-\theta^{-1} = \begin{bmatrix}
1+m_1 & m_2\\
n_1 & 1+n_2
\end{bmatrix} =
\begin{bmatrix}
1+\delta & -\gamma \\
-\beta & 1+\alpha
\end{bmatrix},
\]
and $\det(I-\theta^{-1}) = 0 \iff \alpha+\delta=0 \iff (ps'-1) +
(qr'-1) = 0 \iff ps'+qr'=2 \then \gcd(r',s')\mid 2$. Hence, when
$\det\theta=-1$ and $\rank(\theta-I)=1$, we can always write
$\gcd(r',s') = \gcd(\beta,\gamma,2)$.

Finally, if $\rank(I-\theta^{-1})=2$, the Smith normal form of
$(I-\theta^{-1})$ is a matrix $\begin{bmatrix} c_1 & 0\\ 0 & c_2
\end{bmatrix}$,
with $c_1,c_2>0$, $c_1 \mid c_2$ and $c_1c_2 = |\det(I-\theta^{-1})| =
|\det(\theta-I)|$.
\end{proof}

The calculations of the groups $H^*(G,\zz_2)$ and $H^*(G,\zz_p)$ for
and odd prime $p$ are now simple, as the matrices of $\partial_2^*$
and $\partial_3^*$ are obtained from the case of $\zz$ coefficients by
reducing them mod $2$ and mod $p$: for $\zz_2$ coefficients, we
observe that $\partial_3^*=0$, while for $\zz_p$ coefficients we
observe that $\partial_3^*=0$ if $\det\theta=1$ and $\partial_3^*$ is
a bijection if $\det\theta=-1$. We then get the following two
theorems.

\begin{theorem}\label{theorem:HZ2}
The cohomology groups $H^*(G,\zz_2)$ are given by
\begin{align*}
H^0(G, \zz_2) &\cong \zz_2,\\
H^1(G, \zz_2) &\cong (\zz_2)^{3-\rank_{\zz_2}(\theta-I)}, \\
H^2(G, \zz_2) &\cong (\zz_2)^{3-\rank_{\zz_2}(\theta-I)}, \\
H^3(G, \zz_2) &\cong \zz_2,\\
H^n(G, \zz_2) &\cong 0, \text{ if $n\ge 4$}.
\end{align*}
\end{theorem}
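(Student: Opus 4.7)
The plan is to run exactly the same cohomological computation as in the proof of Theorem~\ref{theorem:HZ}, but with coefficients in $\zz_2$, using that tensoring/Hom with $\zz_2$ reduces the boundary matrices mod $2$. Applying $\Hom_{\zz G}(-,\zz_2)$ to the resolution~(\ref{eq:resGZ}) yields a cochain complex
\[
0 \to \zz_2 \xrightarrow{\partial_1^*} (\zz_2)^3 \xrightarrow{\partial_2^*} (\zz_2)^3 \xrightarrow{\partial_3^*} \zz_2 \to 0
\]
whose coboundary matrices are the mod-$2$ reductions of (\ref{eq:partial2*}) and (\ref{eq:partial3*}). The immediate consequences are $\partial_1^*=0$, hence $H^0(G,\zz_2)\cong\zz_2$, and $H^n(G,\zz_2)=0$ for $n\ge 4$ since the resolution has length $3$.

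The key observation is that $\partial_3^* = 0$ over $\zz_2$. This is because $[\partial_3^*] = [0,\,0,\,-1+\det\theta]$ and $\det\theta = \pm1$ forces $-1+\det\theta\in\{0,-2\}$, both $\equiv 0\pmod 2$. Consequently $\ker\partial_3^* = (\zz_2)^3$ and $H^3(G,\zz_2)\cong\zz_2/\im\partial_3^* \cong \zz_2$, generated by $[w^*]$.

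For the middle groups I would work over the field $\zz_2$ and use rank-nullity. The matrix $[\partial_2^*]$ mod $2$ has its upper-left $2\times 2$ block equal to $I-\theta^{-1}$ reduced mod $2$, whose $\zz_2$-rank equals $\rank_{\zz_2}(\theta-I)$ (since $\theta-I$ and $I-\theta^{-1}$ differ by multiplication by the invertible matrix $-\theta^{-1}$, and this is preserved under mod-$2$ reduction); the third row and column are zero. Therefore $\rank_{\zz_2}\partial_2^* = \rank_{\zz_2}(\theta-I)$, so
\[
H^1(G,\zz_2) \cong \ker\partial_2^* \cong (\zz_2)^{3-\rank_{\zz_2}(\theta-I)},
\]
and, since $\ker\partial_3^* = (\zz_2)^3$,
\[
H^2(G,\zz_2) \cong (\zz_2)^3 / \im\partial_2^* \cong (\zz_2)^{3-\rank_{\zz_2}(\theta-I)}.
\]

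There is no real obstacle here beyond the verification that $\partial_3^*\equiv 0\pmod 2$; the remainder is a routine linear-algebra calculation over a field, made uniform by the fact that no torsion invariants (such as the $c_1,c_2$ of Theorem~\ref{theorem:HZ}) survive the passage to $\zz_2$-coefficients.
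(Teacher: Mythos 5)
Your proposal is correct and follows exactly the paper's route: reduce the coboundary matrices (\ref{eq:partial2*}) and (\ref{eq:partial3*}) mod $2$, observe that $-1+\det\theta\equiv 0\pmod 2$ forces $\partial_3^*=0$, and finish with rank--nullity over the field $\zz_2$. The only detail you add beyond the paper's brief remark is the explicit justification that $\rank_{\zz_2}(I-\theta^{-1})=\rank_{\zz_2}(\theta-I)$, which is a correct and welcome clarification.
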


\begin{theorem}\label{theorem:HZp}
Let $p$ be an odd prime. The cohomology groups $H^*(G,\zz_p)$, where
$\zz_p$ is the trivial $\zz G$-module, are given by
\begin{align*}
H^0(G, \zz_p) &\cong \zz_p,\\
H^1(G, \zz_p) &\cong (\zz_p)^{3-\rank_{\zz_p}(\theta-I)}, \\
H^2(G, \zz_p) &\cong \begin{cases} (\zz_p)^{3-\rank_{\zz_p}(\theta-I)}, & \text{if $\det\theta=1$}, \\ (\zz_p)^{2-\rank_{\zz_p}(\theta-I)}, & \text{if $\det\theta=-1$}, \end{cases} \\ 
H^3(G, \zz_p) &\cong \begin{cases}\zz_p, & \text{if $\det\theta=1$}, \\ 0, & \text{if $\det\theta=-1$},\end{cases} \\
H^n(G, \zz_p) &\cong 0, \text{ if $n\ge 4$}.
\end{align*}
\end{theorem}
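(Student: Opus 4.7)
The plan is to mimic the proof of Theorem~\ref{theorem:HZ}, applying the functor $\Hom_{\zz G}(-,\zz_p)$ to the resolution of Theorem~\ref{theorem:resolution}. This yields a cochain complex of $\zz_p$-vector spaces
\[
0 \to \zz_p \xrightarrow{\partial_1^*} (\zz_p)^3 \xrightarrow{\partial_2^*} (\zz_p)^3 \xrightarrow{\partial_3^*} \zz_p \to 0
\]
whose coboundary maps are the mod-$p$ reductions of the integral ones. So $\partial_1^* = 0$, the matrix of $\partial_2^*$ is~(\ref{eq:partial2*}) reduced mod $p$ (the top-left $2\times 2$ block equals $I-\theta^{-1}\pmod p$ with a zero third row and column), and $\partial_3^* = [0,\,0,\,\det\theta-1] \pmod p$.

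The key observation, already flagged in the paragraph preceding the statement, is that since $p$ is odd, $\det\theta - 1 \pmod p$ is either $0$ (when $\det\theta=1$) or $-2$, which is a unit in $\zz_p$ (when $\det\theta=-1$). Consequently $\partial_3^*$ is either zero or a surjection with kernel exactly $\gen{z_1^*,z_2^*}$. Rank-nullity over the field $\zz_p$ gives $\dim\ker\partial_2^* = 3 - \rank_{\zz_p}(I-\theta^{-1}) = 3 - \rank_{\zz_p}(\theta-I)$ and $\dim\im\partial_2^* = \rank_{\zz_p}(\theta-I)$, and because the third row of $[\partial_2^*]$ vanishes we automatically have $\im\partial_2^* \subseteq \gen{z_1^*,z_2^*} \subseteq \ker\partial_3^*$ in both cases.

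Assembling the pieces is then routine. For $H^0$ and $H^1$: the answers $\zz_p$ and $(\zz_p)^{3-\rank_{\zz_p}(\theta-I)}$ follow from $\partial_1^*=0$ and the rank computation for $\partial_2^*$. For $H^3 = \zz_p/\im\partial_3^*$: we get $\zz_p$ when $\det\theta=1$ (since $\partial_3^*=0$) and $0$ when $\det\theta=-1$ (since $\partial_3^*$ is surjective). For $H^2 = \ker\partial_3^*/\im\partial_2^*$: when $\det\theta=1$, $\ker\partial_3^*=(\zz_p)^3$ and the quotient has dimension $3-\rank_{\zz_p}(\theta-I)$; when $\det\theta=-1$, $\ker\partial_3^* = \gen{z_1^*,z_2^*}$, so the dimension drops by one to $2-\rank_{\zz_p}(\theta-I)$. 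Finally, $H^n(G,\zz_p)=0$ for $n\geq 4$ because the resolution has length three.

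There is essentially no obstacle: the only substantive point is the invertibility of $-2$ in $\zz_p$ for odd $p$, which is precisely what distinguishes this case from Theorem~\ref{theorem:HZ2} and removes the need for any Smith-normal-form analysis (over a field, rank alone determines the quotient up to isomorphism).
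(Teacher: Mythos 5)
Your proposal is correct and follows essentially the same route as the paper, which proves this theorem by the one-line observation that the matrices of $\partial_2^*$ and $\partial_3^*$ are the mod-$p$ reductions of the integral ones, with $\partial_3^*$ vanishing when $\det\theta=1$ and surjective (since $-2$ is a unit for odd $p$) when $\det\theta=-1$. You have merely written out the routine rank--nullity bookkeeping that the paper leaves implicit.
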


\section{Diagonal approximation and the cup product}\label{section:diagonal}

In order to compute the cup product in the cohomology groups
$H^*(G,\zz)$, $H^*(G,\zz_2)$ and $H^*(G,\zz_p)$, we seek a diagonal
approximation $\Delta\colon P\to (P\otimes P)$ for the free resolution
$P$ given in
Theorem~\ref{theorem:resolution}. In~\cite{TomodaPeter2008}, we find
the following two propositions, which can help us determine $\Delta$.

\begin{proposition}\label{proposition:tomodacontraction}
For a group $G$, let
\[
\xymatrix{
  \cdots \ar[r] & C_n \ar[r] & \cdots \ar[r] & C_1 \ar[r] & C_0 \ar[r]^-{\varepsilon} \ar[r] & \zz \ar[r] & 0
}
\]
be a finitely generated free resolution of $\zz$ over $\zz G$, that
is, each $C_n$ is finitely generated as a $\zz G$-module. If $s$ is a
contracting homotopy for the resolution $C$, then a contracting
homotopy $\tilde{s}$ for the free resolution $C\otimes C$ of $\zz$
over $\zz G$ is given by
\begin{align*}
\tilde{s}_{-1}\colon &\zz \to C_0\otimes C_0 \\
&\tilde{s}_{-1}(1) = s_{-1}(1)\otimes s_{-1}(1), \\
\tilde{s}_n\colon &(C\otimes C)_n \to (C\otimes C)_{n+1} \\
& \tilde{s}_n(u_i\otimes v_{n-i}) = s_i(u_i)\otimes v_{n-i} + s_{-1}\varepsilon(u_i)\otimes s_{n-i}(v_{n-i}), \quad\text{se $n\ge 0$},
\end{align*}
where $s_{-1}\varepsilon\colon C_0\to C_0$ is extended to
$s_{-1}\varepsilon = \{(s_{-1}\varepsilon)_n \colon C_n \to C_n\}$ in
such a way that $(s_{-1}\varepsilon)_n = 0$ for $n\ge 1$.
\end{proposition}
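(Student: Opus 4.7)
The plan is to verify directly the two defining identities of a contracting homotopy for the augmented complex $C \otimes C$: first, that $(\varepsilon\otimes\varepsilon)\tilde{s}_{-1} = \id_{\zz}$; second, that $\tilde{d}\tilde{s}_n + \tilde{s}_{n-1}\tilde{d} = \id$ on $(C\otimes C)_n$ for $n\ge 1$, together with $\tilde{d}_1\tilde{s}_0 + \tilde{s}_{-1}(\varepsilon\otimes\varepsilon) = \id$ on $(C\otimes C)_0$. The differential on $C\otimes C$ is taken with the usual Koszul sign convention, $\tilde{d}(u\otimes v) = d(u)\otimes v + (-1)^{|u|}u\otimes d(v)$.

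The first identity is immediate: $(\varepsilon\otimes\varepsilon)\bigl(s_{-1}(1)\otimes s_{-1}(1)\bigr) = \varepsilon\bigl(s_{-1}(1)\bigr)\cdot\varepsilon\bigl(s_{-1}(1)\bigr) = 1$.

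For the second, I would fix a pure tensor $u_i\otimes v_{n-i}$ and expand $(\tilde{d}\tilde{s}_n + \tilde{s}_{n-1}\tilde{d})(u_i\otimes v_{n-i})$ using the defining formulas. After collecting terms, repeated use of the identities $d_{k+1}s_k + s_{k-1}d_k = \id_{C_k}$ for $k\ge 1$, of $d_1 s_0 + s_{-1}\varepsilon = \id_{C_0}$, and of $\varepsilon\circ d_k = 0$ for $k\ge 1$, applied to each tensor factor, should reduce the sum to $u_i\otimes v_{n-i}$: the contracting homotopy identity on the first factor produces the identity term, and all the remaining cross-terms (those involving $s_i(u_i)\otimes d(v_{n-i})$, compositions of $d$ with $s_{-1}\varepsilon$, and so on) cancel in pairs thanks to the Koszul signs and the vanishing of $\varepsilon\circ d$.

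The work is entirely computational, and the main obstacle is the bookkeeping of corner cases: the formula for $\tilde{s}_n$ contains two summands, the second of which contributes only when $i=0$ (since $s_{-1}\varepsilon$ is extended by zero on $C_k$ for $k\ge 1$); similarly, after applying $\tilde{d}$ to $u_i\otimes v_{n-i}$ one must track in which bidegrees the $s_{-1}\varepsilon$ factor in $\tilde{s}_{n-1}$ actually fires. A clean writeup therefore separates the interior case $1\le i\le n-1$ from the edges $i=0$ and $i=n$ (where respectively $d(u_0)$ and $d(v_0)$ vanish but $\varepsilon$ on that factor need not), and handles the degree-zero case $n=0$ on its own, where the role of $\tilde{s}_{-1}\tilde{d}_0$ is played by $\tilde{s}_{-1}(\varepsilon\otimes\varepsilon)$. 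In each regime the cancellations are mechanical once the terms are laid out.
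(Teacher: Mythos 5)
Your verification is correct, but note that the paper itself offers no proof of this proposition: it is imported verbatim from Tomoda--Zvengrowski~\cite{TomodaPeter2008} and used as a black box, so there is no internal argument to compare yours against. Your direct check is the standard one and it does go through. Concretely, with the Koszul convention $\tilde d(u\otimes v)=du\otimes v+(-1)^{|u|}u\otimes dv$ (which you rightly make explicit, since the unsigned formula for $\tilde s$ only works with this sign), the terms $(-1)^{i+1}s_i(u_i)\otimes dv$ from $\tilde d\tilde s$ and $(-1)^{i}s_i(u_i)\otimes dv$ from $\tilde s\tilde d$ cancel; for $i\ge 1$ the second summand of $\tilde s$ vanishes and $ds+sd=\id$ on the first factor gives $u_i\otimes v_{n-i}$; for $i=0$ one gets $\bigl(u_0-s_{-1}\varepsilon(u_0)\bigr)\otimes v+s_{-1}\varepsilon(u_0)\otimes(ds+sd)(v)=u_0\otimes v$, where the potential term $s_{-1}\varepsilon(d_1u_1)\otimes s(v)$ dies because $\varepsilon d_1=0$; and in total degree zero the computation leaves the single term $-s_{-1}\varepsilon(u)\otimes s_{-1}\varepsilon(v)$, which is exactly what $\tilde s_{-1}(\varepsilon\otimes\varepsilon)(u\otimes v)$ restores. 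So the corner cases you flag ($i=0$ versus $i\ge1$, and $n=0$) are the only ones, and each is closed by the identities you list; writing out the bookkeeping as you describe yields a complete proof.
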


\begin{proposition}\label{proposition:tomodadiagonal}
For a group $G$, let
\[
\xymatrix{
  \cdots \ar[r] & C_n \ar[r]^-{d_n} & \cdots \ar[r] & C_1 \ar[r]^-{d_1} & C_0 \ar[r]^-{\varepsilon} \ar[r] & \zz \ar[r] & 0
}
\]
be a finitely generated free resolution of $\zz$ over $\zz G$ (i.e.,
each $C_n$ is a finitely generated free $\zz G$-module), and let $s$
be a contracting homotopy for this resolution $C$. If $\tilde{s}$ is
the contracting homotopy for the resolution $C\otimes C$ given by
Proposition~\ref{proposition:tomodacontraction}, then a diagonal
approximation $\Delta\colon C\to C\otimes C$ can be defined in the
following way: for each $n\ge 0$, the map $\Delta_n\colon C_n \to
(C\otimes C)_n$ is given in each generator $\rho$ of $C_n$ by
\[
\begin{array}{l}
\Delta_0 = s_{-1}\varepsilon\otimes s_{-1}\varepsilon,\\
\Delta_n(\rho) = \tilde{s}_{n-1}\Delta_{n-1}d_n(\rho), \quad\text{if $n\ge 1$}.
\end{array}
\]
\end{proposition}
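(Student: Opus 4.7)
The plan is to verify the two properties that characterize a diagonal approximation: (i) the augmentation identity $(\varepsilon\otimes\varepsilon)\Delta_0=\varepsilon$, and (ii) the chain-map identity $\tilde{d}_n\Delta_n=\Delta_{n-1}d_n$ for $n\ge 1$, where $\tilde{d}$ denotes the differential on the tensor product complex $C\otimes C$. Together with $\zz G$-linearity, these say precisely that $\Delta$ is a chain map lifting the identity $\zz\to\zz\otimes\zz\cong\zz$.

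First I would address $\zz G$-linearity: since each $C_n$ is $\zz G$-free, the stated formula is used to define $\Delta_n$ on a chosen basis $\{\rho\}$ and one extends $\zz G$-linearly. The homotopy $\tilde{s}$ itself is only $\zz$-linear, but this causes no trouble because the recursion evaluates $\tilde{s}_{n-1}$ only on the particular element $\Delta_{n-1}d_n(\rho)$. The augmentation identity is then a direct check: for a basis element $\rho\in C_0$, $\Delta_0(\rho)=s_{-1}\varepsilon(\rho)\otimes s_{-1}\varepsilon(\rho)$, and applying $\varepsilon\otimes\varepsilon$ together with $\varepsilon s_{-1}=\id_\zz$ yields $\varepsilon(\rho)$.

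The main step is an induction on $n\ge 1$ establishing $\tilde{d}_n\Delta_n=\Delta_{n-1}d_n$. The engine is the contracting-homotopy identity for $\tilde{s}$ from Proposition~\ref{proposition:tomodacontraction}, namely $\tilde{d}_n\tilde{s}_{n-1}+\tilde{s}_{n-2}\tilde{d}_{n-1}=\id$ on $(C\otimes C)_{n-1}$ for $n\ge 2$, and $\tilde{d}_1\tilde{s}_0+\tilde{s}_{-1}(\varepsilon\otimes\varepsilon)=\id$ at the bottom. Applied to $\Delta_{n-1}d_n(\rho)$, this produces
\[
\tilde{d}_n\Delta_n(\rho)=\tilde{d}_n\tilde{s}_{n-1}\Delta_{n-1}d_n(\rho)=\Delta_{n-1}d_n(\rho)-\tilde{s}_{n-2}\tilde{d}_{n-1}\Delta_{n-1}d_n(\rho),
\]
and the inductive hypothesis turns the correction term into $\tilde{s}_{n-2}\Delta_{n-2}d_{n-1}d_n(\rho)$, which vanishes because $d_{n-1}d_n=0$. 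The base case $n=1$ runs in the same way: the correction is $\tilde{s}_{-1}(\varepsilon\otimes\varepsilon)\Delta_0 d_1(\rho)=\tilde{s}_{-1}\varepsilon d_1(\rho)=0$, using the augmentation identity already established and $\varepsilon d_1=0$.

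The main obstacle is not any difficult computation but bookkeeping: one must keep track of the distinction between the $\zz G$-linear extension of $\Delta_n$ and the merely $\zz$-linear contracting homotopy $\tilde{s}$, and apply the contracting-homotopy identity with the correct indexing, including the augmented case at degree zero where $\tilde{s}_{-1}(\varepsilon\otimes\varepsilon)$ plays the role of $\tilde{s}_{n-2}\tilde{d}_{n-1}$. Beyond that, every step reduces to the composition-is-zero facts $d_{n-1}d_n=0$ and $\varepsilon d_1=0$ together with $\varepsilon s_{-1}=\id_\zz$.
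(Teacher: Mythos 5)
The paper offers no proof of this proposition: it is quoted from Tomoda and Zvengrowski~\cite{TomodaPeter2008} and used as a black box, so there is no in-paper argument to compare yours against. Your verification is the standard one and is correct: a diagonal approximation is precisely a $\zz G$-chain map $\Delta\colon C\to C\otimes C$ covering the identity of $\zz\cong\zz\otimes\zz$, and your induction, driven by the contracting-homotopy identities $\tilde{d}_n\tilde{s}_{n-1}+\tilde{s}_{n-2}\tilde{d}_{n-1}=\id$ (with $\tilde{s}_{-1}(\varepsilon\otimes\varepsilon)$ playing that role in degree one) together with $d_{n-1}d_n=0$ and $\varepsilon d_1=0$, establishes exactly that. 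You are also right to isolate the only delicate bookkeeping point, namely that $\tilde{s}$ is merely $\zz$-linear while $\Delta_n$ is defined on a $\zz G$-basis and extended $\zz G$-linearly; since both sides of the identity being propagated are $\zz G$-linear once defined, applying the inductive hypothesis to the non-basis element $d_n(\rho)$ is legitimate. One step is stated too quickly: in the base case, $(\varepsilon\otimes\varepsilon)\Delta_0(\rho)=\varepsilon\bigl(s_{-1}\varepsilon(\rho)\bigr)\cdot\varepsilon\bigl(s_{-1}\varepsilon(\rho)\bigr)=\varepsilon(\rho)^2$, not $\varepsilon(\rho)$, so the augmentation identity requires the chosen generators of $C_0$ to satisfy $\varepsilon(\rho)=1$ (or $0$). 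This normalization is implicit in the statement and holds in the paper's application, where $\varepsilon_0(x)=1$, but it is a genuine hypothesis your argument uses and should be stated.
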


The two above propositions tell us that if we can manage to find a
contracting homotopy for the resolution $P$ given in
Theorem~\ref{theorem:resolution}, then we can construct a diagonal
approximation $\Delta$ and then procceed to calculate the cup product
in the cohomology ring. The maps $s_{-1}\colon \zz \to P_0$ and
$s_0\colon P_0\to P_1$ are easy to define: we take $s_{-1}(1) = x$ and
$\ds s_0((a^mb^n,t^k)x) = \frac{\partial (a^m,1)}{\partial (a,1)}y_1 +
(a^m,1)\frac{\partial (b^n,1)}{\partial (b,1)}y_2 +
(a^mb^n,1)\frac{\partial (1,t^k)}{\partial (1,y)}y_3$, where the
partial derivatives are the Fox derivatives, and it is immediate to
check that $\varepsilon s_{-1} = \id_\zz$ and $\partial_1 s_0+
s_{-1}\varepsilon_0 = \id_{P_0}$. As to the maps $s_1$ and $s_2$, we
don't need their full description to compute the cup product in the
cohomology ring. In fact, we don't need $s_2$ at all, as we shall see
now: having defined the maps $s_{-1}$ and $s_0$, we use
Propositions~\ref{proposition:tomodacontraction}
and~\ref{proposition:tomodadiagonal} to compute $\Delta_0$ and
$\Delta_1$. We get
\begin{align*}
\Delta_0 \colon &P_0 \to P_0\otimes P_0\\
&\Delta_0(x) = x\otimes x,\\
\Delta_1 \colon &P_1 \to (P_1\otimes P_0) \oplus (P_0\otimes P_1)\\
&\Delta_1(y_1) = y_1\otimes(a,1)x + x\otimes y_1,\\
&\Delta_1(y_2) = y_2\otimes(b,1)x + x\otimes y_2,\\
&\Delta_1(y_3) = y_3\otimes(1,t)x + x\otimes y_3.
\end{align*}
Let $\pi_{ij}\colon (P\otimes P)_{i+j} \to P_i\otimes P_j$ denote the
projection and $\Delta_{ij} = \pi_{ij}\circ\Delta_{i+j}: P_{i+j} \to
P_i\otimes P_j$. We observe that, for the computation of $H^1(G, \zz)
\otimes H^1(G, \zz) \stackrel{\smile}{\to} H^2(G, \zz\otimes\zz)$, we
need to know only the map $\Delta_{11} \colon P_2 \to P_1\otimes P_1$,
and the computation of $H^1(G, \zz) \otimes H^2(G, \zz)
\stackrel{\smile}{\to} H^3(G, \zz\otimes\zz)$ can be done once we have
$\Delta_{12} \colon P_3\to P_1\otimes P_2$.

From the resolution of Theorem~\ref{theorem:resolution} and
Propositions~\ref{proposition:tomodacontraction}
and~\ref{proposition:tomodadiagonal}, we can then verify that the maps
$\Delta_{11}$ and $\Delta_{12}$ can be calculated if we know how to
compute $s_1$ for the elements of the following list:
\begin{equation}\label{eq:listas1}
\begin{array}{l}
y_3, \\
(a,1)y_3, \\ 
(b,1)y_3, \\
(a^mb^n,1)y_1, \\
(a^mb^n,1)y_2, \\
(a^mb^n,t)y_1, \\
(a^mb^n,t)y_2. \\
\end{array}
\end{equation}

Before we compute $s_1$ for the elements of this list, we make one
more observation that will be useful later: if $M$ and $N$ are trivial
$\zz G$-modules, $g\in \zz G$, $m\in M$ and $f\in \Hom_{\zz G}(M,N)$,
then
\begin{equation}\label{qe:trivialaction}
f(gm) = g\cdot f(m) = \varepsilon(g)\cdot f(m),
\end{equation}
where $\varepsilon\colon \zz G\to \zz$ is the augmentation map.

\begin{lemma}\label{lemma:contraction}
Let
\[
\xymatrix{
0 \ar[r] & P_3 \ar[r]^-{\partial_3} & P_2 \ar[r]^-{\partial_2} & P_1 \ar[r]^-{\partial_1} & P_0 \ar[r]^-{\varepsilon_0} & {\zz} \ar[r] & 0
}
\]
be the free resolution of Theorem~\ref{theorem:resolution}. There is a
contracting homotopy $s$ for the resolution $P$ such that
\begin{align*}
s_{-1}(1) &= x,\\
s_0((a^mb^n,t^k)x) &= \frac{\partial (a^m,1)}{\partial (a,1)}y_1 + (a^m,1)\frac{\partial (b^n,1)}{\partial (b,1)}y_2 + (a^mb^n,1)\frac{\partial (1,t^k)}{\partial (1,y)}y_3,\\
s_1((a^mb^n,1)y_1) &= -(a^m,1)\frac{\partial(b^n,1)}{\partial(b,1)}z_3,\\
s_1((a^mb^n,1)y_2) &= 0, \\
s_1(y_3) &= 0, \\
s_1((a,1)y_3) &= 0, \\
s_1((b,1)y_3) &= 0, \\
s_1((a^mb^n,t)y_1) & = -(a^mb^n,1)\dfrac{\partial (a^\alpha,1)}{\partial (a,1)}z_1 - (a^{m+\alpha}b^n,1)\dfrac{\partial (b^\beta,1)}{\partial (b,1)}z_2 + \\
&\phantom{=}+ \left(-(a^m,1)\dfrac{\partial (a^\alpha,1)}{\partial (a,1)}\dfrac{\partial (b^n,1)}{\partial (b,1)} + \sum_{u,v\in\zz}h_{uv}(a^ub^v,t)\right)z_3, \\
s_1((a^mb^n,t)y_2) & = -(a^mb^n,1)\dfrac{\partial (a^\gamma,1)}{\partial (a,1)}z_1 - (a^{m+\gamma}b^n,1)\dfrac{\partial (b^\delta,1)}{\partial (b,1)}z_2 + \\
&\phantom{=}+ \left(-(a^m,1)\dfrac{\partial (a^\gamma,1)}{\partial (a,1)}\dfrac{\partial (b^n,1)}{\partial (b,1)} + \sum_{u,v\in\zz}q_{uv}(a^ub^v,t)\right)z_3,
\end{align*}
where the integers $h_{uv}$ e $q_{uv}$ satisfy
\begin{align*}
\sum_{u,v\in\zz} h_{uv} &= \frac{\alpha\beta}{2}\left(\gamma+\delta - \det\theta\right), \\
\sum_{u,v\in\zz} q_{uv} &= \frac{\gamma\delta}{2}\left(\alpha+\beta - \det\theta\right). \\
\end{align*}
\end{lemma}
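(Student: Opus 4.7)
The plan is to verify the three contracting-homotopy identities---$\varepsilon s_{-1} = \id_\zz$, $\partial_1 s_0 + s_{-1}\varepsilon_0 = \id_{P_0}$, and $\partial_2 s_1(v) + s_0\partial_1(v) = v$ for each $v$ in~\eqref{eq:listas1}---by direct Fox-calculus computation. The identity for $s_{-1}$ is immediate; for $s_0$, specializing the classical Fox identity $g-1 = \sum_i \frac{\partial g}{\partial g_i}(g_i-1)$ to $g = (a^mb^n,t^k)$ written as $(a,1)^m(b,1)^n(1,t)^k$ yields $\partial_1 s_0((a^mb^n,t^k)x) = [(a^mb^n,t^k)-(1,1)]x$, as required.

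For each $s_1$-entry the task on $v$ is to produce $s_1(v)\in P_2$ with $\partial_2 s_1(v) = v - s_0\partial_1(v)$. I would first dispose of the easy cases. The three $y_3$-entries collapse to $s_1 = 0$ because $s_0\partial_1(v) = v$ in each case (for instance $s_0\partial_1(y_3) = s_0((1,t)x - x) = y_3$), and the same phenomenon occurs for $(a^mb^n,1)y_2$. For $(a^mb^n,1)y_1$ I would factor $(a^m,1)\frac{\partial(b^n,1)}{\partial(b,1)}$ out of $v - s_0\partial_1(v)$, exposing the combination $[(b,1)-(1,1)]y_1 - [(a,1)-(1,1)]y_2 = -\partial_2(z_3)$, which gives the stated formula.

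The heart of the argument is the two cases $v = (a^mb^n,t)y_1$ and $v = (a^mb^n,t)y_2$. For the first, I would use the semi-direct-product identity $(a^mb^n,t)(a,1) = (a^{m+\alpha}b^{n+\beta},t)$ to rewrite $\partial_1(v)$, then apply the Fox identity $\frac{\partial(a^{m+\alpha},1)}{\partial(a,1)} - \frac{\partial(a^m,1)}{\partial(a,1)} = (a^m,1)\frac{\partial(a^\alpha,1)}{\partial(a,1)}$ and its $b$-analogue to simplify $s_0\partial_1(v)$. Expanding $\partial_2$ of the proposed $s_1(v)$, the $y_3$-component matches automatically from the $z_1,z_2$-contributions, which telescope to $(a^mb^n,1) - (a^{m+\alpha}b^{n+\beta},1)$. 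The $y_1$ and $y_2$ components require, besides the $A,B,C,D$-contributions from $z_1,z_2$, both parts of the $z_3$-coefficient: the explicit $-(a^m,1)\frac{\partial(a^\alpha,1)}{\partial(a,1)}\frac{\partial(b^n,1)}{\partial(b,1)}$ piece and the residual $\sum h_{uv}(a^ub^v,t)$, whose individual entries are chosen case-by-case in the four sign regimes of $m_1,n_1,m_2,n_2$ in parallel to the construction of $A,B,C,D,E$ in Theorem~\ref{theorem:resolution}. The closed-form identity $\sum h_{uv} = \frac{\alpha\beta}{2}(\gamma+\delta-\det\theta)$ would then be extracted by summing the coefficients in each sign regime, using $\sum_{k=0}^{N-1}k = N(N-1)/2$ together with the relations $\alpha m_1 + \beta m_2 = -1$ and $\alpha n_1 + \beta n_2 = 0$ coming from $(-\theta^{-1})\theta = -I$. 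The case $(a^mb^n,t)y_2$ proceeds identically with $(\alpha,\beta)$ replaced by $(\gamma,\delta)$, giving the companion formula for $\sum q_{uv}$.

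The main obstacle is the bookkeeping in this last step: one must verify $\partial_2 s_1(v) = v - s_0\partial_1(v)$ separately in each sign regime and then perform the count that collapses the case-by-case descriptions of $h_{uv}$ and $q_{uv}$ into the closed sums. The sign-case proliferation is the same phenomenon already managed when constructing $E$, and the $y_3$-telescoping is routine; the delicate part is matching $(a^mb^n,1)\frac{\partial(a^\alpha,1)}{\partial(a,1)}\,A + (a^{m+\alpha}b^n,1)\frac{\partial(b^\beta,1)}{\partial(b,1)}\,C$ against the $y_1$-component of $v - s_0\partial_1(v)$ consistently across signs, and this is where I would concentrate the most care.
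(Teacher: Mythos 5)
Your proposal is correct and follows essentially the same route as the paper: the paper also reduces each case to the identity $\partial_2 s_1(v)+s_0\partial_1(v)=v$, handles the $y_3$- and $(a^mb^n,1)y_j$-cases by the same short Fox-calculus computations, and determines the $z_1,z_2$-coefficients from the telescoping $y_3$-component before solving for the $z_3$-coefficient $\sum g_{uv}(a^ub^v,1)+\sum h_{uv}(a^ub^v,t)$ by the same directed-graph, sign-by-sign construction used for $E$, with the closed form for $\sum h_{uv}$ extracted by the same coordinate-summing count. No substantive difference in method or any gap to report.
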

\begin{proof}
We'we already defined the maps $s_{-1}$ and $s_0$. From
$\partial_2s_1+s_0\partial_1 = \id_{P_1}$, it is trivial to verify
that we can take $s_1(y_3) = s_1((a,1)y_3) = s_1((b,y)y_3) = 0$. It is
also easy to see that we can define $s_1((a^mb^n,1)y_1) =
-(a^m,1)\dfrac{\partial (b^n,1)}{\partial (b,1)}z_3$ and
$s_1((a^mb^n,1)y_2) = 0$.

Let us now write $s_1((a^mb^n,t)y_1) = k_1^{m,n}z_1 + k_2^{m,n}z_2 +
k_3^{m,n}z_3$, where $k_1^{m,n}, k_2^{m,n}, k_3^{m,n}\in \zz
G$. Substituting in $\partial_2(s_1((a^mb^m,t)y_1)) +
s_0(\partial_1((a^mb^n,t)y_1)) = (a^mb^n,t)y_1$, we get
\begin{equation}\label{eq:sistemas1ambnty1}
\left|
\begin{array}{l}
k_1^{m,n}A + k_2^{m,n}C + k_3^{m,n}[(1,1)-(b,1)] = (a^mb^n,t) - (a^m,1)\dfrac{\partial(a^\alpha,1)}{\partial(a,1)}, \\
k_1^{m,n}B + k_2^{m,n}D + k_3^{m,n}[(a,1)-(1,1)] = (a^m,1)\dfrac{\partial(b^n,1)}{\partial(b,1)} - (a^{m+\alpha},1)\dfrac{\partial(b^{n+\beta},1)}{\partial(b,1)}, \\
k_1^{m,n}[(a,1)-(1,1)] + k_2^{m,n}[(b,1)-(1,1)] = (a^mb^n,1) - (a^{m+\alpha}b^{n+\beta},1).
\end{array}
\right.
\end{equation}
The last equation above is satisfied for $k_1^{m,n} =
-(a^mb^n,1)\dfrac{\partial(a^\alpha,1)}{\partial(a,1)}$ and $k_2^{m,n}
= -(a^{m+\alpha}b^n,1)\dfrac{\partial(b^\beta,1)}{\partial(b,1)}$. We
must then find $k_3^{m,n}$ such that
\begin{align}\label{eq:k31}
k_3^{m,n}[(1,1)-(b,1)] &= (a^mb^n,t) - (a^m,1)\dfrac{\partial(a^\alpha,1)}{\partial(a,1)} - k_1^{m,n}A - k_2^{m,n}C  \notag\\
&= (a^mb^n,t) - (a^m,1)\dfrac{\partial(a^\alpha,1)}{\partial(a,1)} + (a^mb^n,1)\dfrac{\partial(a^\alpha,1)}{\partial(a,1)}A + (a^{m+\alpha}b^n,1)\dfrac{\partial(b^\beta,1)}{\partial(b,1)}C
\end{align}
and
\begin{align}\label{eq:k32}
k_3^{m,n}[(a,1)-(1,1)] &= (a^m,1)\dfrac{\partial(b^n,1)}{\partial (b,1)} - (a^{m+\alpha},1)\dfrac{\partial (b^{n+\beta},1)}{\partial (b,1)} - k_1^{m,n}B - k_2^{m,n}C  \notag\\
&= (a^m,1)\dfrac{\partial(b^n,1)}{\partial (b,1)} - (a^{m+\alpha},1)\dfrac{\partial (b^{n+\beta},1)}{\partial (b,1)} + (a^mb^n,1)\dfrac{\partial(a^\alpha,1)}{\partial (a,1)}B + \notag\\
&\phantom{=}\quad +(a^{m+\alpha}b^n,1)\dfrac{\partial(b^\beta,1)}{\partial (b,1)}D.
\end{align}
Now, if
\[
k_3^{m,n} = \sum_{u,v\in\zz} g_{uv}(a^ub^v,1) + \sum_{u,v\in\zz} h_{uv}(a^ub^v,t), \quad g_{uv},h_{uv}\in\zz,
\]
then the equations~(\ref{eq:k31})~and~(\ref{eq:k32}) are written as
\begin{align}
\sum_{u,v\in\zz} (g_{uv}-g_{u(v-1)})(a^ub^v,1) &= (a^mb^n,1)\dfrac{\partial(a^\alpha,1)}{\partial(a,1)} - (a^m,1)\dfrac{\partial(a^\alpha,1)}{\partial(a,1)}, \label{eq:guv1}\\
\sum_{u,v\in\zz} (g_{(u-1)v}-g_{uv})(a^ub^v,1) &= (a^m,1)\dfrac{\partial(b^n,1)}{\partial(b,1)} - (a^{m+\alpha},1)\dfrac{\partial(b^n,1)}{\partial(b,1)}, \label{eq:guv2}
\end{align}
and
\begin{align}
\sum_{u,v\in\zz}(h_{uv}-h_{(u-\gamma)(v-\delta)})(a^ub^v,t) &= (a^mb^n,t) + (a^mb^n,1)\dfrac{\partial(a^\alpha,1)}{\partial(a,1)}(A-(1,1)) + \notag\\
&\phantom{=} \quad +(a^{m+\alpha}b^n,1)\dfrac{\partial(b^\beta,1)}{\partial(b,1)}C, \label{eq:huv1}\\
\sum_{u,v\in\zz}(h_{(u-\alpha)(v-\beta)}-h_{uv})(a^ub^v,t) &= (a^mb^n,1)\dfrac{\partial(a^\alpha,1)}{\partial(a,1)}B + (a^{m+\alpha}b^n,1)\dfrac{\partial (b^\beta,1)}{\partial (b,1)}(D-(1,1)). \label{eq:huv2}
\end{align}
We can solve~(\ref{eq:guv1}) and~(\ref{eq:guv2}) in a way that is
similar to the one we used to find the element $E$ given by
equation~(\ref{eq:E}). We get
\begin{equation}\label{eq:guvresposta1}
\sum_{u,v\in\zz} g_{uv}(a^ub^v,1) = -(a^m,1)\dfrac{\partial(a^\alpha,1)}{\partial(a,1)}\dfrac{\partial(b^n,1)}{\partial(b,1)}.
\end{equation}
In order to solve~(\ref{eq:huv1}) and~(\ref{eq:huv2}), we first
observe that explicit expressions for the elements
\[
A,\:B,\:C,\:D, \:\frac{\partial(a^\alpha,1)}{\partial(a,1)}, \frac{\partial(b^\beta,1)}{\partial(b,1)}
\]
depend on the signs of $\alpha$, $\beta$, $\gamma$, $\delta$ and
$\det\theta$, so we have many cases to consider. As all of them are
similar, we show how to solve~(\ref{eq:huv1}) and~(\ref{eq:huv2}) when
$\alpha$, $\beta$, $\gamma$ and $\delta$ are positive and $\det\theta
= 1$. In this case we have
\begin{align*}
A-(1,1) &= - \sum_{k=0}^{\delta-1} (a^{k\alpha}b^{k\beta},t), \\
B &= \sum_{k=0}^{\beta-1} (a^{1+k\gamma}b^{k\delta},t), \\
C &= \sum_{k=1}^\gamma (a^{-k\alpha}b^{-k\beta},t), \\
D-(1,1) &= - \sum_{k=1}^\alpha (a^{-k\gamma}b^{1-k\delta},t),
\end{align*}
and the equations~(\ref{eq:huv1}) and~(\ref{eq:huv2}) can be written
as
\begin{align}\label{eq:huv1caso1}
\sum_{u,v\in\zz}(h_{uv}-h_{(u-\gamma)(v-\delta)})&(a^ub^v,t) = \notag\\
&= (a^mb^n,1)\left[(1,t) - \sum_{j=0}^{\alpha-1}\sum_{k=0}^{\delta-1}(a^{j+k\alpha}b^{k\beta},t) +
\sum_{j=0}^{\beta-1}\sum_{k=1}^\gamma(a^{\alpha-k\alpha}b^{j-k\beta},t)\right]
\end{align}
and
\begin{align}\label{eq:huv2caso1}
\sum_{u,v\in\zz}(h_{(u-\alpha)(v-\beta)}-h_{uv})&(a^ub^v,t) = \notag\\
&= (a^mb^n,1)\left[\sum_{j=0}^{\alpha-1}\sum_{k=0}^{\beta-1}(a^{1+j+k\gamma}b^{k\delta},t) - \sum_{j=0}^{\beta-1}\sum_{k=1}^\alpha (a^{\alpha-k\gamma}b^{1+j-k\delta},t)\right].
\end{align}
Just like we did to compute the value of the variable $E$ in the proof
of Theorem~\ref{theorem:resolution}, once more we define a directed
graph, with the set $\zz\times\zz$ as vertices and edges going from
each $(m,n)$ to $(m-\alpha,n-\beta)$ and $(m+\gamma,n+\delta)$. On the
edge from $(u-\gamma,v-\delta)$ to $(u,v)$, we put a label with the
value of $h_{uv}-h_{(u-\gamma)(v-\delta)}$, which is given by
equation~(\ref{eq:huv1caso1}) and on the edge from $(u,v)$ to
$(u-\alpha,v-\beta)$, we put a label with the value of
$h_{(u-\alpha)(v-\beta)}-h_{uv}$, which is given
by~(\ref{eq:huv2caso1}). In order to draw the graph, we apply again
the map $T\colon \zz\times\zz \to \zz\times\zz$ defined by
\[
\begin{bmatrix}
u\\v
\end{bmatrix} = T(x,y) =
\begin{bmatrix}
\alpha & \gamma\\
\beta & \delta
\end{bmatrix}^{-1}
\begin{bmatrix}
x\\y
\end{bmatrix},
\]
and we call the new coordinates, obtained after applying $T$, $u$ and
$v$.

In the case we are analyzing, let's take a look at the right
side of~(\ref{eq:huv2caso1}). As we are interested in calculating the
cup product with trivial coefficients $\zz$, $\zz_2$ and $\zz_p$, all
we need is the value of $\varepsilon(k_3^{m,n}) = \sum g_{uv} + \sum
h_{uv}$, where $\varepsilon \colon \zz G\to \zz$ is the augmentation
map. Since~(\ref{eq:guvresposta1}) already gives us $\sum g_{uv}$, all
we need is $\sum h_{uv}$, and for that we can drop the term
$(a^mb^n,1)$ in the right side of~(\ref{eq:huv2caso1}) and focus on
\[
\sum_{j=0}^{\alpha-1}\sum_{k=0}^{\beta-1}(a^{1+j+k\gamma}b^{k\delta},t) - \sum_{j=0}^{\beta-1}\sum_{k=1}^\alpha (a^{\alpha-k\gamma}b^{1+j-k\delta},t).
\]
The sum
$\sum_{j=0}^{\alpha-1}\sum_{k=0}^{\beta-1}(a^{1+j+k\gamma}b^{k\delta},t)$
provides us with edges with label $1$. For a fixed value of $j
\in\{0,\ldots,\alpha-1\}$, we get the following ``block'' of edges
(after applying $T$):

\begin{center}
\psset{unit=0.5cm}
\begin{pspicture}(0,0)(3,8)
\psdots[dotscale=0.6](1,1)(1,2)(1,3)(1,5)(1,6)(1,7)
\psdots[dotscale=0.6](2,1)(2,2)(2,3)(2,5)(2,6)(2,7)
\psline{->}(1.9,1)(1.1,1) \uput{0.06cm}[-90](1.5,1){$\scriptstyle 1$}
\psline{->}(1.9,2)(1.1,2) \uput{0.06cm}[-90](1.5,2){$\scriptstyle 1$}
\psline{->}(1.9,3)(1.1,3) \uput{0.06cm}[-90](1.5,3){$\scriptstyle 1$}
\psline{->}(1.9,5)(1.1,5) \uput{0.06cm}[90](1.5,5){$\scriptstyle 1$}
\psline{->}(1.9,6)(1.1,6) \uput{0.06cm}[90](1.5,6){$\scriptstyle 1$}
\psline{->}(1.9,7)(1.1,7) \uput{0.06cm}[90](1.5,7){$\scriptstyle 1$}
\psdots[dotscale=0.4](1.5,3.7)(1.5,4)(1.5,4.3)
\uput{1cm}[0](2,1){$(k=0)$}
\uput{1cm}[0](2,7){$(k=\beta-1)$}
\end{pspicture}
\end{center}
Now, as we let $j$ vary between $0$ and $\alpha-1$, other blocks of
edges with label $1$ appear, and are arranged according to the
following pattern, where each white rectangle represents a block of
edges for a fixed value of $j$:

\begin{center}
\psset{unit=0.4cm}
\begin{pspicture}(0,0)(16.5,15)
\psframe(0,12)(0.5,15) \uput{0.06cm}[180](0,13.5){$\scriptstyle j=0$}
\psframe(4,9)(4.5,12) \uput{0.06cm}[180](4,10.5){$\scriptstyle j=1$}
\psdots[dotscale=0.4](8.25,7.5)(7.25,7.5)(9.25,7.5)
\psframe(12,3)(12.5,6) \uput{0.06cm}[180](12,4.5){$\scriptstyle j=\alpha-2$}
\psframe(16,0)(16.5,3) \uput{0.06cm}[180](16,1.5){$\scriptstyle j=\alpha-1$}
\psline{->}(0.6,12)(3.9,12) \uput{0.06cm}[90](2.25,12){$\scriptstyle \delta$}
\psline{->}(0.5,11.9)(0.5,9.1) \uput{0.06}[180](0.5,10.5){$\scriptstyle \beta$}
\end{pspicture}
\end{center}
If we do the same with the sum $-\sum_{j=0}^{\beta-1}\sum_{k=1}^\alpha
(a^{\alpha-k\gamma}b^{1+j-k\delta},t)$, which will give us edges
labeled with $-1$, we get a picture of the horizontal edges of our
graph (where the gray rectangles represent the edges of label $-1$):

\begin{center}
\psset{unit=0.4cm}
\begin{pspicture}(-2,-1)(22.5,16)
\psframe[fillstyle=solid, fillcolor=gray](0,12)(0.5,15) \uput{0.06cm}[180](0,13.5){$\scriptstyle j=\beta-1$}
\psframe[fillstyle=solid, fillcolor=gray](4,9)(4.5,12) \uput{0.06cm}[180](4,10.5){$\scriptstyle j=\beta-2$}
\psframe[fillstyle=solid, fillcolor=gray](12,3)(12.5,6) \uput{0.06cm}[180](12,4.5){$\scriptstyle j=1$}
\psframe[fillstyle=solid, fillcolor=gray](16,0)(16.5,3) \uput{0.06cm}[180](16,1.5){$\scriptstyle j=0$}
\psframe(5,11)(5.5,15) \uput{0.06cm}[0](5.5,13){$\scriptstyle j=0$}
\psframe(10,7)(10.5,11) \uput{0.06cm}[0](10.5,9){$\scriptstyle j=1$}
\psframe(20,0)(20.5,4) \uput{0.06cm}[0](20.5,2){$\scriptstyle j=\alpha-1$}
\psset{linestyle=dotted}
\psline(5.5,11)(10,11)
\psline(10.5,7)(15.25,7)(15.25,4)(20,4)
\psline(0.5,12)(4,12)
\psline(4.5,9)(8.25,9)(8.25,6)(12,6)
\psline(12.5,3)(16,3)
\psline(-2,0)(22,0) \uput{0.06cm}[90](-2,0){$\scriptstyle v = -\alpha\beta$}
\psline(-2,15)(22,15) \uput{0.06cm}[-90](22,15){$\scriptstyle v = -1$}
\end{pspicture}
\end{center}

In order to label each vertex in a way that is consistent with the
labels on the edges, it is enough to put the label $1$ in every vertex
between a gray block and a white one, and put the label $0$ on every
other vertex. Since we can compute the exact location of the blocks in
our graph, we can then write
\begin{align*}
\sum_{u,v\in\zz} h_{uv} &=
\left(
\begin{array}{p{4cm}}
sum of the $u$-coordinates of the origins of vertices with label $1$
\end{array}
\right) -
\left(
\begin{array}{p{4cm}}
sum of the $u$-coordinates of the origins of vertices with label $-1$
\end{array}
\right) \\
&= \frac{(\alpha\delta+\delta)\alpha\beta}{2} -
\frac{(\alpha\delta-\gamma+1)\alpha\beta}{2} = \frac{\alpha\beta}{2}(\gamma+\delta-1).
\end{align*}
One thing is missing: we need to check that our labels on the vertices
are also consistent with equation~(\ref{eq:huv1caso1}), but
fortunately they are.

If we now analyze all the remaining cases, we get
\[
\sum_{u,v\in\zz} h_{uv} =
\begin{cases}
\dfrac{\alpha\beta}{2}(\gamma+\delta-1), & \text{if $\det\theta=1$,}\\
\dfrac{\alpha\beta}{2}(\gamma+\delta+1), & \text{if $\det\theta=-1$.}
\end{cases}
\]
Also, the computation of $s_1((a^mb^n,t)y_2)$ is analogous to that of
$s_1((a^mb^n,t)y_1)$, and what we get is exactly what is in the
statement of this lemma.
\end{proof}

\begin{remark}
If we are interested in computing the cup product with non trivial
coefficients, then we can still construct the graphs described in the
lemma for a given action $\theta$ and explicitly calculate
$s_1((a^mb^n,t)y_1)$ and $s_1((a^mb^n,t)y_2)$.
\end{remark}

Now we proceed to calculate the cup product $H^p(G,\zz) \otimes
H^q(G,\zz) \stackrel{\smile}{\to} H^{p+q}(G,\zz)$. Just to simplify
the notation, let's write the elements $A$, $B$, $C$, $D$ and $E\in
\zz G$ as
\[
A = \sum \pm A_k, \quad B = \sum \pm B_k, \quad C = \sum \pm C_k,
\quad D = \sum \pm D_k, \quad E = \sum \pm E_k.
\]
Then, using Propositions~\ref{proposition:tomodacontraction}
and~\ref{proposition:tomodadiagonal}, we can write
\begin{align}\label{eq:delta2}
&\Delta_2(z_1) = \tilde{s}_1\Delta_1\varphi_2(z_1) = \tilde{s}_1\bigl(A\Delta_1(y_1) + B\Delta_1(y_2) + [(a,1)-(1,1)]\Delta_1(y_3)\bigr) \notag\\
&\phantom{\Delta_2(z_1)} = \tilde{s}_1\bigl(A[y_1\otimes(a,1)x + x\otimes y_1] + B[y_2\otimes(b,1)x + x\otimes y_2] + \notag\\
&\phantom{\Delta_2(z_1) = \tilde{s}_1\bigl(} +(a,1)y_3\otimes(a,t)x + (a,1)x\otimes(a,1)y_3 - y_3\otimes(1,t)x - x\otimes y_3 \bigr), \notag\\
&\Delta_2(z_2) = \tilde{s}_1\Delta_1\varphi_2(z_2) = \tilde{s}_1\bigl(C\Delta_1(y_1) + D\Delta_1(y_2) + [(b,1)-(1,1)]\Delta_1(y_3)\bigr) \notag\\
&\phantom{\Delta_2(z_2)} = \tilde{s}_1\bigl(C[y_1\otimes(a,1)x + x\otimes y_1] + D[y_2\otimes(b,1)x + x\otimes y_2] + \\
&\phantom{\Delta_2(z_2) = \tilde{s}_1\bigl(} +(b,1)y_3\otimes(b,t)x + (b,1)x\otimes(b,1)y_3 - y_3\otimes(1,t)x - x\otimes y_3 \bigr), \notag\\
&\Delta_2(z_3) = \tilde{s}_1\Delta_1\varphi_2(z_3) = \tilde{s}_1\bigl(\Delta_1(y_1) - (b,1)\Delta_1(y_1) + (a,1)\Delta_1(y_2) - \Delta_1(y_2) \bigr) \notag\\
&\phantom{\Delta_2(z_3)} = s_1(y_1)\otimes(a,1)x + x\otimes s_1(y_1) - s_1((b,1)y_1)\otimes(ab,1)x - s_0((b,1)x)\otimes(b,1)y_1 \notag\\
&\phantom{\Delta_2(z_3) =} - x\otimes s_1((b,1)y_1) + s_1((a,1)y_2)\otimes(ab,1)x + s_0((a,1)x)\otimes(a,1)y_2 \notag\\
&\phantom{\Delta_2(z_3) =} +x\otimes s_1((a,1)y_2) - s_1(y_2)\otimes(b,1)x - x\otimes s_1(y_2), \notag
\end{align}
and we get
\begin{align}\label{eq:delta11}
\Delta_{11}(z_1) &= \pi_{11}(\Delta_2(z_1)) = \pi_{11}\circ\tilde{s}_1(A(x\otimes y_1) + B(x\otimes y_2) + (a,1)x\otimes(a,1)y_3 - x\otimes y_3) \notag\\
&= \pm\sum s_0(A_kx)\otimes A_ky_1 \pm \sum s_0(B_kx)\otimes B_ky_2 + y_1\otimes (a,1)y_3, \notag\\
\Delta_{11}(z_2) &= \pi_{11}(\Delta_2(z_2)) = \pi_{11}\circ\tilde{s}_1(C(x\otimes y_1) + D(x\otimes y_2) + (b,1)x\otimes(b,1)y_3 - x\otimes y_3) \\
&= \pm\sum s_0(C_kx)\otimes C_ky_1 \pm \sum s_0(D_kx)\otimes D_ky_2 + y_2\otimes (b,1)y_3, \notag\\
\Delta_{11}(z_3) &= \pi_{11}(\Delta_2(z_3)) = -s_0((b,1)x)\otimes(b,1)y_1 + s_0((a,1)x)\otimes(a,1)y_2 \notag \\
&= y_1\otimes (a,1)y_2 - y_2\otimes (b,1)y_1. \notag
\end{align}

Now, if $u,v\in \Hom_{\zz G}(P_1,\zz)$, equation~(\ref{eq:delta11})
implies that the product $[u]\smile [v]\in H^2(G,\zz)$ is represented
by a map $(u\smile v) \in \Hom_{\zz G}(P_2,\zz)$ such that
\begin{align}\label{eq:h1cuph1}
(u\smile v)(z_1) &= (u\times v)\Delta_{11}(z_1) =\notag\\
&= \left(-\dfrac{\alpha m_1(m_1+1)}{2}\right)u(y_1)\otimes v(y_1) +
\left(-\dfrac{\beta m_1(m_1+1)}{2}\right)u(y_2)\otimes v(y_1) +
m_1u(y_3)\otimes v(y_1) + \notag\\
&\phantom{=}+ \left(n_1+\dfrac{\gamma n_1(n_1-1)}{2}\right)u(y_1)\otimes v(y_2) +
\left(\dfrac{\delta n_1(n_1-1)}{2}\right)u(y_2)\otimes v(y_2) + n_1u(y_3)\otimes v(y_2) +\notag\\
&\phantom{=}+ u(y_1)\otimes v(y_3), \notag\\
(u\smile v)(z_2) &= (u \times v)\Delta_{11}(z_2) =\\
&= \left(-\dfrac{\alpha m_2(m_2+1)}{2}\right)u(y_1)\otimes v(y_1) +
\left(-\dfrac{\beta m_2(m_2+1)}{2}\right)u(y_2)\otimes v(y_1) +
m_2u(y_3)\otimes v(y_1) + \notag\\
&\phantom{=}+ \left(\dfrac{\gamma n_2(n_2-1)}{2}\right)u(y_1)\otimes v(y_2) +
\left(n_2 + \dfrac{\delta n_2(n_2-1)}{2}\right)u(y_2)\otimes v(y_2) +
n_2u(y_3)\otimes v(y_2) +\notag\\
&\phantom{=}+ u(y_2)\otimes v(y_3), \notag\\
(u\smile v)(z_3) &= u(y_1)\otimes v(y_2) - u(y_2)\otimes v(y_1). \notag
\end{align}

Even though the expressions for $A$, $B$, $C$, and $D$ depend on the
signs of $m_1$, $n_1$, $m_2$ and $n_2$, the above equations hold in
every case.

Using once again Propositions~\ref{proposition:tomodacontraction}
and~\ref{proposition:tomodadiagonal}, we also write
\begin{align}\label{eq:delta3}
\Delta_3(w) &= \tilde{s}_2\Delta_2\varphi_3(w) = \tilde{s}_2\Delta_2\bigl([(1,1)-(b,1)]z_1 + [(a,1)-(1,1)]z_2 + Ez_3\bigr) \notag\\
&= \tilde{s}_2\bigl([(1,1)-(b,1)]\Delta_2(z_1) + [(a,1)-(1,1)]\Delta_2(z_2) + E\Delta_2(z_3)\bigr).
\end{align}

A term belonging to $P_1\otimes P_2$ in $\Delta_3(w)$ arises when we
evaluate $\tilde{s}_2$ at an element of $P_0\otimes P_2$ (this follows
from Proposition~\ref{proposition:tomodacontraction}). Hence we have
$\Delta_{12}(w) = \pi_{12}\circ\Delta_3(w) = \pi_{12}\circ
\tilde{s}_2\circ \Delta_{02}\circ \partial_3(w)$, and looking
at~(\ref{eq:delta2}), we get
\begin{align}\label{eq:delta12expandido}
\Delta_{12}(w) &= \pi_{12}\circ \tilde{s}_2(\Delta_{02}(z_1)-(b,1)\Delta_{02}(z_1) + (a,1)\Delta_{02}(z_2) - \Delta_{02}(z_2) + E\Delta_{02}(z_3)) \notag\\
&= \pm \sum y_1\otimes (a,1)s_1(C_ky_1) \pm \sum y_1\otimes (a,1)s_1(D_ky_2) \\
&\phantom{ = }\mp \sum y_2\otimes (b,1)s_1(A_ky_1) \mp \sum y_2\otimes (b,1)s_1(B_ky_2) \notag\\
&\phantom{ = }+ \sum \pm s_0(E_kx)\otimes E_kz_3. \notag
\end{align}

If $u\in \Hom_{\zz G}(P_1,\zz)$ and $v\in \Hom_{\zz G}(P_2,\zz)$, then
$[u]\smile [v]\in H^3(G,\zz)$ is represented by a map $u\smile v \in
\Hom_{\zz G}(P_3,\zz)$ such that $(u\smile v)(w) = (u\times
v)\Delta_{12}(w)$. Using the above expression for $\Delta_{12}(w)$, we
can then write
\begin{align*}
(u\times v)\left(\sum\pm s_0(E_kx)\otimes E_kz_3\right) &= \sum_{(m,n)\in I_1\times J_1}\!\!\!\!\!\!\!\!\!(\alpha m+\gamma n)u(y_1)\otimes v(z_3)+
\sum_{(m,n)\in I_1\times J_1}\!\!\!\!\!\!\!\!\! (\beta m +\delta n)u(y_2)\otimes v(z_3) \\
&\phantom{=} + \sum_{(m,n)\in I_1\times J_1}\!\!\!\!\!\!\!\!\! u(y_3)\otimes v(z_3) \\
&\phantom{=} - \sum_{(m,n)\in I_2\times J_2}\!\!\!\!\!\!\!\!\! (\alpha m + \gamma n)u(y_1)\otimes v(z_3)
- \sum_{(m,n)\in I_2\times J_2}\!\!\!\!\!\!\!\!\! (\beta m + \delta n)u(y_2)\otimes v(z_3) \\
&\phantom{=} - \sum_{(m,n)\in I_2\times J_2}\!\!\!\!\!\!\!\!\! u(y_3)\otimes v(z_3) \\
&= \left(\alpha|J_1|\sum_{m\in I_1}\!\!m + \gamma|I_1|\sum_{n\in J_1}\!\!n - \alpha|J_2|\sum_{m\in I_2}\!\!m - \gamma|I_2|\sum_{n\in J_2}\!\!n\right)u(y_1)\otimes v(z_3) \\
&\phantom{=}+ \left(\beta|J_1|\!\sum_{m\in I_1}\!\!m + \delta|I_1|\!\sum_{n\in J_1}\!\!n - \beta|J_2|\!\sum_{m\in I_2}\!\!m - \delta|I_2|\! \sum_{n\in J_2}\!\!n\right)u(y_2)\otimes v(z_3) \\
&\phantom{=}+ (\det\theta) u(y_3)\otimes v(z_3),
\end{align*}
and similar calculations show that
\begin{align*}
(u\times v)\left(\pm \sum y_1\otimes (a,1)s_1(C_ky_1)\right) &= (-\alpha m_2)u(y_1)\otimes v(z_1) + (-\beta m_2)u(y_1)\otimes v(z_2) \\
&\phantom{=} + \left(\frac{\alpha\beta m_2}{2}\left(\gamma + \delta + m_2 + 1 - \det\theta\right)\right) u(y_1)\otimes v(z_3), \\
(u\times v)\left(\pm \sum y_1\otimes (a,1)s_1(D_ky_2)\right) &= (-\gamma n_2)u(y_1)\otimes v(z_1) + (-\delta n_2)u(y_1)\otimes v(z_2) \\
&\phantom{=} + \left(-\gamma n_2 - \frac{\gamma\delta n_2}{2}\left(-\alpha - \beta + n_2 - 1 + \det\theta\right)\right) u(y_1)\otimes v(z_3),\\
(u\times v)\left(\mp \sum y_2\otimes (b,1)s_1(A_ky_1)\right) &= (\alpha m_1)u(y_2)\otimes v(z_1) + (\beta m_1)u(y_2)\otimes v(z_2) \\
&\phantom{=} + \left(\frac{-\alpha\beta m_1}{2}\left(\gamma + \delta + m_1 + 1 - \det\theta\right)\right) u(y_2)\otimes v(z_3), \\
(u\times v)\left(\mp \sum y_2\otimes (b,1)s_1(B_ky_2)\right) &= (\gamma n_1)u(y_2)\otimes v(z_1) + (\delta n_1)u(y_2)\otimes v(z_2) \\
&\phantom{=} + \left(\frac{\gamma\delta n_1}{2}\left(-\alpha - \beta + n_1 - 1 + \det\theta\right)\right) u(y_2)\otimes v(z_3).
\end{align*}

Substituting those results in~(\ref{eq:delta12expandido}), we are left
with
\begin{align}\label{eq:h1cuph2}
(u\smile v)(w) &= (u\times v)\Delta_{12}(w) \notag\\
&= u(y_1)\otimes v(z_2) - u(y_2)\otimes v(z_1) +\notag\\
&\phantom{=}+ \biggl[\frac{\alpha\beta m_2}{2}\left(\gamma + \delta + m_2 + 1 - \det\theta\right) -\gamma n_2 - \frac{\gamma\delta n_2}{2}\left(-\alpha - \beta + n_2 - 1 + \det\theta\right) +\notag\\
&\phantom{=+}\quad \alpha|J_1|\sum_{m\in I_1}\!\!m + \gamma|I_1|\sum_{n\in J_1}\!\!n - \alpha|J_2|\sum_{m\in I_2}\!\!m - \gamma|I_2|\sum_{n\in J_2}\!\!n \biggr]u(y_1)\otimes v(z_3)+\notag\\
&\phantom{=}+\biggl[\frac{-\alpha\beta m_1}{2}\left(\gamma + \delta + m_1 + 1 - \det\theta\right) + \frac{\gamma\delta n_1}{2}\left(-\alpha - \beta + n_1 - 1 + \det\theta\right) +\notag\\
&\phantom{=+}\quad \beta|J_1|\sum_{m\in I_1}\!\!m + \delta|I_1|\sum_{n\in J_1}\!\!n - \beta|J_2|\sum_{m\in I_2}\!\!m - \delta|I_2|\sum_{n\in J_2}\!\!n \biggr]u(y_2)\otimes v(z_3) +\notag\\
&\phantom{=} + (\det\theta)u(y_3)\otimes v(z_3).
\end{align}

The above equation can be simplified: first, we note that
\begin{gather*}
\frac{\alpha\beta m_2}{2}\left(\gamma + \delta + m_2 + 1 - \det\theta\right) -\gamma n_2 - \frac{\gamma\delta n_2}{2}\left(-\alpha - \beta + n_2 - 1 + \det\theta\right) =\\
=\begin{cases}
0, & \text{ if $\det\theta = 1$}\\
\alpha\gamma(\delta-\beta-1), & \text{ if $\det\theta = -1$},
\end{cases}
\end{gather*}
and
\begin{gather*}
\frac{-\alpha\beta m_1}{2}\left(\gamma + \delta + m_1 + 1 - \det\theta\right) + \frac{\gamma\delta n_1}{2}\left(-\alpha - \beta + n_1 - 1 + \det\theta\right) = \\
=\begin{cases}
0, & \text{ if $\det\theta = 1$},\\
\beta\delta(\gamma-\alpha+1), & \text{ if $\det\theta = -1$}.
\end{cases}
\end{gather*}
We also have the following:

\begin{lemma}\label{lema:Sxy}
Let $S\colon \zz\times\zz \to \zz$ be the function defined by
\[
S(x,y) = x|J_1|\sum_{m\in I_1}m + y|I_1|\sum_{n\in J_1}n - x|J_2|\sum_{m\in I_2}m - y|I_2|\sum_{n\in J_2}n,
\]
where the sets $I_1$, $J_1$, $I_2$ and $J_2$ are described in
Table~\ref{table:ij}. Then
\[
S(\alpha,\gamma) =
\begin{cases}
\dfrac{1-\alpha-\gamma-\alpha\gamma}{2}, & \text{{\vrule height 16pt depth 16pt width 0pt} if $\det\theta=1$},\\
\dfrac{-1+\alpha+\gamma-\alpha\gamma}{2}, & \text{{\vrule height 16pt depth 16pt width 0pt} if $\det\theta=-1$},
\end{cases}
\]
and
\[
S(\beta,\delta) =
\begin{cases}
\dfrac{1-\beta-\delta+\beta\delta}{2}, & \text{{\vrule height 16pt depth 16pt width 0pt} if $\det\theta=1$}, \\
\dfrac{-1+\beta+\delta+\beta\delta}{2}, & \text{{\vrule height 16pt depth 16pt width 0pt} if $\det\theta=-1$}. \\
\end{cases}
\]
\end{lemma}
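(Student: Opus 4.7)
The plan is a direct verification by case analysis on the signs of $m_1, n_1, m_2, n_2$, following the description of $I_1, J_1, I_2, J_2$ in Table~\ref{table:ij}. In every case, each of the four sets is an interval of consecutive integers, so $|I_j|$ and $\sum_{m \in I_j} m$ are given by elementary arithmetic-progression formulas: if $I = [a, b] \cap \zz$ then $|I| = b - a + 1$ and $\sum_{m \in I} m = (b - a + 1)(a + b)/2$, with the analogous statement for the $J_j$. After substituting these closed forms into the definition of $S(x, y)$, I would collect terms and simplify using $\alpha\delta - \beta\gamma = \det\theta = \pm 1$, together with the expressions $m_1 = -\delta/\det\theta$, $m_2 = \gamma/\det\theta$, $n_1 = \beta/\det\theta$, $n_2 = -\alpha/\det\theta$ that relate the two parametrizations.

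To illustrate in the representative case $\alpha, \beta, \gamma, \delta > 0$ and $\det\theta = 1$ (so that $m_1 = -\delta$, $m_2 = \gamma$, $n_1 = \beta$, $n_2 = -\alpha$, with $m_1 < 0$, $n_1 > 0$, $m_2 > 0$, $n_2 < 0$), the table gives $|I_1| = \delta$, $|J_1| = \alpha$, $|I_2| = \gamma$, $|J_2| = \beta$, together with $\sum I_1 = \delta(\delta - 2\gamma - 1)/2$, $\sum J_1 = \alpha(\alpha - 1)/2$, $\sum I_2 = \gamma(2\delta - \gamma - 1)/2$, and $\sum J_2 = -\beta(\beta + 1)/2$. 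Plugging into $S(\alpha, \gamma)$ and expanding, the terms of degrees three and four in $\alpha, \beta, \gamma, \delta$ cancel once one invokes $\alpha\delta = 1 + \beta\gamma$, and the expression collapses to $(1 - \alpha - \gamma - \alpha\gamma)/2$, as claimed. Running the same recipe with $(x,y) = (\beta,\delta)$ produces $(1 - \beta - \delta + \beta\delta)/2$.

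The remaining sign patterns of $(m_1, n_1, m_2, n_2)$, constrained by $m_1 n_2 - m_2 n_1 = \det\theta$, are each handled by the same mechanical procedure: read off the intervals from the table, compute cardinalities and sums by the arithmetic-progression formula, substitute, and use $\alpha\delta - \beta\gamma = \pm 1$ to kill the high-degree terms. The main obstacle is not conceptual but organizational: the algebraic simplification in each case is a multi-term polynomial calculation in four variables where the correct cancellations must be tracked carefully across the $\det\theta = +1$ and $\det\theta = -1$ subcases. A possible structural shortcut is to observe that, for fixed $\det\theta$, both sides of the claimed identity are polynomial functions of $(\alpha, \beta, \gamma, \delta)$ of bounded degree on the affine variety $\alpha\delta - \beta\gamma = \pm 1$, so agreement on sufficiently many representative $\theta$ would force agreement in general; nevertheless, the case-by-case verification is the cleanest route and is the one I would write out.
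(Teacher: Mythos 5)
Your proposal is correct and coincides with the paper's own proof, which simply states that the lemma follows by a straightforward calculation from the descriptions of $I_1,J_1,I_2,J_2$ in Table~\ref{table:ij} together with the relation $\bigl[\begin{smallmatrix} m_1 & m_2 \\ n_1 & n_2\end{smallmatrix}\bigr] = -\theta^{-1}$; your worked representative case (with the correct interval cardinalities, arithmetic-progression sums, and the cancellation via $\alpha\delta-\beta\gamma=\pm 1$) is exactly that calculation carried out explicitly.
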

\begin{proof}
The proof is nothing but a straightforward calculation using the
descriptions of the sets $I_1$, $J_1$, $I_2$ and $J_2$ and remembering that
\[
\begin{bmatrix}
m_1 & m_2 \\
n_1 & n_2
\end{bmatrix} = 
-\begin{bmatrix}
\alpha & \gamma \\
\beta & \delta
\end{bmatrix}^{-1} =
\frac{1}{\alpha\delta-\beta\gamma}
\begin{bmatrix}
-\delta & \gamma \\
\beta & -\alpha
\end{bmatrix}.
\]
\end{proof}

With those two observations, equation~(\ref{eq:h1cuph2}) can then be
written, if $\det\theta=1$, as
\begin{align}\label{eq:h1cuph2det1}
(u\smile v)(w) &= u(y_1)\otimes v(z_2) - u(y_2)\otimes v(z_1) + \left(\frac{1-\alpha-\gamma-\alpha\gamma}{2}\right)u(y_1)\otimes v(z_3) +\notag\\
&\phantom{=+}\left(\frac{1-\beta-\delta+\beta\delta}{2}\right)u(y_2)\otimes v(z_3) + u(y_3)\otimes v(z_3),
\end{align}
and, if $\det\theta=-1$, we can write
\begin{align}\label{eq:h1cuph2det-1}
(u\smile v)(w) &= u(y_1)\otimes v(z_2) - u(y_2)\otimes v(z_1) + \notag\\
&\phantom{=+}\biggl[\alpha\gamma(\delta-\beta-1) + \left(\frac{-1+\alpha+\gamma-\alpha\gamma}{2}\right)\biggr]u(y_1)\otimes v(z_3) +\\
&\phantom{=+}\biggl[\beta\delta(\gamma-\alpha+1) + \left(\frac{-1+\beta+\delta+\beta\delta}{2}\right)\biggr]u(y_2)\otimes v(z_3) - u(y_3)\otimes v(z_3). \notag
\end{align}

We are now finally in a position where we can compute the products
\[
H^p(G,\zz) \otimes H^q(G,\zz) \stackrel{\smile}{\to} H^{p+q}(G,\zz\otimes\zz) \cong H^{p+q}(G,\zz).
\]

\begin{theorem}\label{theorem:cupZ}
The cohomology ring $H^*(G,\zz)$ is given by:
\begin{enumerate}
\item If $\rank(\theta-I)=0$, then
\[
H^*(G,\zz) \cong \dfrac{\zz[\zeta_1,\zeta_2,\zeta_3]}{(\zeta_1^2 = \zeta_2^2 = \zeta_3^2 = 0)},
\]
where $\dim(\zeta_1) = \dim(\zeta_2) = \dim(\zeta_3) = 1$.

\item If $\rank(\theta-I)=1$ and $\det\theta=1$, then
\[
H^*(G,\zz) \cong \dfrac{\zz[\zeta_1,\zeta_2,\xi_2, \xi_3]}
{\begin{pmatrix}\zeta_1^2
  = \zeta_2^2 = 0,\: \gcd(\beta,\gamma)\zeta_1\zeta_2 = 0,\:
  \zeta_2\xi_2 = 0, \: \zeta_2\xi_3=\zeta_1\xi_2, \\
  \zeta_1\xi_3 = \frac{1}{2}\sqrt{\frac{|\beta|}{\gcd(\beta,\gamma)}}(1-\alpha-\alpha\gamma-\beta\gamma)\zeta_1\xi_2, \\
  \xi_2^2 = \xi_3^2 = \xi_2\xi_3 = 0
\end{pmatrix}},
\]
where $\dim(\zeta_1) = \dim(\zeta_2) = 1$ and $\dim(\xi_2) =
\dim(\xi_3) = 2$.

\item If $\rank(\theta-I)=1$, $\det\theta=-1$ and $\gcd(\beta,\gamma,2)=1$, then
\[
H^*(G,\zz) \cong \dfrac{\zz[\zeta_1,\zeta_2,\xi]}{\begin{pmatrix}
    \zeta_1^2 = \zeta_2^2 = 0,\: \zeta_1\zeta_2 = 2\xi,\\
    \zeta_2\xi=0,\: 2\zeta_1\xi=0,\: \xi^2=0
  \end{pmatrix}},
\]
where $\dim(\zeta_1) = \dim(\zeta_2) = 1$ and $\dim(\xi)=2$.

\item If $\rank(\theta-I)=1$, $\det\theta=-1$ and $\gcd(\beta,\gamma,2)=2$, then
\[
H^*(G,\zz) \cong \dfrac{\zz[\zeta_1,\zeta_2,\xi_1,\xi_2]}{\begin{pmatrix}
      \zeta_1^2=\zeta_2^2=0,\: \zeta_1\zeta_2 = m\xi_1+\xi_2, 2\xi_1=0 \\
      \zeta_1\xi_2 = \left(\frac{kr'-\ell s'}{2}\right)\zeta_1\xi_1, \zeta_2\xi_1=0, \\
      \zeta_2\xi_2=0, \xi_1^2=\xi_2^2=\xi_1\xi_2=0
    \end{pmatrix}},
\]
where $\dim(\zeta_1) = \dim(\zeta_2) = 1$, $\dim(\xi_1) = \dim(\xi_2) =
2$.
Also, the integers $k$ and $\ell$ are such that $pk+q\ell=1$, where
$p$ and $q$ are relatively prime integers satisfying $\dfrac{p}{q} =
\dfrac{\gamma}{1+\delta} = \dfrac{1+\alpha}{\beta}$. Finally, the
integers $r'$, $s'$ and $m$ are such that $ps'+qr'=2$,
$m=\dfrac{2k-s'}{2q} = \dfrac{r'-2\ell}{2p}$.

\item If $\rank(\theta-I)=2$ e $\det\theta=1$, then
\[
H^*(G,\zz) \cong \dfrac{\zz[\zeta,\xi_1,\xi_2,\xi_3]}{\begin{pmatrix}
    \zeta^2=0,\: c_1\xi_1=c_2\xi_2=0, \:\zeta\xi_1=\zeta\xi_2=0,\\
    \xi_1^2=\xi_2^2=\xi_3^2=0, \xi_1\xi_2=\xi_1\xi_3=\xi_2\xi_3=0
\end{pmatrix}},
\]
where $\dim(\zeta)=1$ and $\dim(\xi_1) = \dim(\xi_2) = \dim(\xi_3) =
2$. Also, the integers $c_1$ and $c_2$ are such that $c_1\mid c_2$ and
$c_1c_2 = |\det(\theta-I)|$.
\item If $\rank(\theta-I)=2$ and $\det\theta=-1$, then
\[
H^*(G,\zz) \cong \dfrac{\zz[\zeta,\xi_1,\xi_2,\chi]}{\begin{pmatrix}
    \zeta^2=0,\: c_1\xi_1=c_2\xi_2=0,\: \zeta\xi_1=0,\: \zeta\xi_2=0,\\
    \zeta\chi=0,\: \xi_1^2=\xi_2^2=0,\: \xi_1\chi=0,\: \xi_2\chi=0,\: \chi^2=0
\end{pmatrix}},
\]
where $\dim(\zeta) = 1$, $\dim(\xi_1)=\dim(\xi_2)=2$, $\dim(\chi)=3$.
Also, the integers $c_1$ and $c_2$ are such that $c_1\mid c_2$ and
$c_1c_2 = |\det(\theta-I)|$.
\end{enumerate}
\end{theorem}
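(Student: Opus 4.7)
Since $H^n(G,\zz)=0$ for $n\ge 4$, every product of cohomology classes of total degree $\ge 4$ is automatically zero, so the only nontrivial pairings are $H^1\smile H^1\to H^2$ and $H^1\smile H^2\to H^3$. The plan is to handle the six cases by the same template: fix explicit cocycle representatives for the generators listed in Theorem \ref{theorem:HZ}, evaluate formula (\ref{eq:h1cuph1}) on the $z_k$ for $H^1\smile H^1$ and either (\ref{eq:h1cuph2det1}) or (\ref{eq:h1cuph2det-1}) on $w$ for $H^1\smile H^2$, and finally rewrite the resulting cochains in the chosen basis of $H^2$ or $H^3$ to read off the relations. Graded commutativity combined with $H^n=0$ for $n\ge 4$ already forces $\xi_i\smile\xi_j=0$ and $\xi_i\smile\chi=0$, so what actually needs to be computed in each case is the list of products $\zeta_i\smile\zeta_j$ and $\zeta_i\smile\xi_j$.

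In each case I would draw the degree-$1$ generators either from the dual basis $\{y_1^*,y_2^*,y_3^*\}$ or from the explicit linear combinations constructed in the proof of Theorem \ref{theorem:HZ}. The degree-$2$ generators $\xi_j$ are obtained by pulling back the elementary divisors produced by the Smith normal form of $\partial_2^*$ (or equivalently of $I-\theta^{-1}$), so that $\xi_j$ corresponds to a specific integral combination of $z_1^*,z_2^*,z_3^*$; the degree-$3$ generator $\chi$ is $[w^*]$. Substituting $u,v\in\{y_1^*,y_2^*,y_3^*\}$ (or the listed linear combinations) into (\ref{eq:h1cuph1}) gives $(u\smile v)(z_k)$ as explicit integer polynomials in $\alpha,\beta,\gamma,\delta$; projecting these integers to the torsion quotient $\gen{z_1^*}\oplus\gen{z_2^*}/\im\partial_2^*$ yields the quadratic relations such as $\zeta_i^2=0$, $\gcd(\beta,\gamma)\zeta_1\zeta_2=0$, $\zeta_1\zeta_2=2\xi$, or $\zeta_1\zeta_2=m\xi_1+\xi_2$. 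The cubic products $\zeta_i\smile\xi_j$ arise in the same way from (\ref{eq:h1cuph2det1}) and (\ref{eq:h1cuph2det-1}) evaluated on $w$.

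The main obstacle is the bookkeeping in case (4), where $\det\theta=-1$, $\rank(\theta-I)=1$ and $\gcd(\beta,\gamma,2)=2$: $H^2$ has generators $\xi_1$ of order $2$ and $\xi_2$ free, produced from the Smith form $\mathrm{diag}(2,0)$ of $I-\theta^{-1}$ through the change of basis built from $p,q,r',s',k,\ell$ in the proof of Theorem \ref{theorem:HZ}. The difficulty is to express the cocycle representing $[y_1^*]\smile[y_2^*]$, together with $[z_3^*]$, in the chosen basis $\{\xi_1,\xi_2\}$; this is what forces the integer $m=(2k-s')/(2q)=(r'-2\ell)/(2p)$ and yields the relation $\zeta_1\zeta_2=m\xi_1+\xi_2$, and a parallel linear-algebra computation pins down the coefficient $(kr'-\ell s')/2$ in $\zeta_1\xi_2=((kr'-\ell s')/2)\zeta_1\xi_1$. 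Once these identifications are made in case (4), the remaining cases (1), (2), (3), (5), (6) follow by the same procedure with simpler change-of-basis matrices, and the completeness of the listed relations is verified by checking that the relations-plus-generators produce the correct abelian group in each graded piece.
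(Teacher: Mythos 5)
Your plan coincides with the paper's own proof: it proceeds case by case, takes the generators fixed in Theorem~\ref{theorem:HZ} (the degree-two ones read off from the Smith normal form of $I-\theta^{-1}$), evaluates~(\ref{eq:h1cuph1}) on the $z_k$ and~(\ref{eq:h1cuph2det1}) or~(\ref{eq:h1cuph2det-1}) on $w$, and rewrites the results in the chosen bases, with all products of total degree $\ge 4$ vanishing for free. One small correction to your description of case (4): there $z_3^*$ is not a cocycle (since $\det\theta=-1$ one has $\partial_3^*z_3^*=-2w^*$), the degree-one generators are $[u]=[-\tfrac{s'}{2}y_1^*+\tfrac{r'}{2}y_2^*]$ and $[y_3^*]$, and the relation $\zeta_1\zeta_2=m\xi_1+\xi_2$ arises from $[u]\smile[y_3^*]$ rather than from $[y_1^*]\smile[y_2^*]$ and $[z_3^*]$.
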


\begin{proof}
\noindent{\it First case:\/ } if $\rank(\theta-I) = 0$, then
\begin{align*}
H^1(G,\zz) &= \gen{[y_1^*]}\oplus\gen{[y_2^*]}\oplus\gen{[y_3^*]} \cong \zz\oplus\zz\oplus\zz,\\
H^2(G,\zz) &= \gen{[z_1^*]}\oplus\gen{[z_2^*]}\oplus\gen{[z_3^*]} \cong \zz\oplus\zz\oplus\zz,\\
H^3(G,\zz) &= \gen{[w^*]} \cong \zz.
\end{align*}
and equations~(\ref{eq:h1cuph1}) and~(\ref{eq:h1cuph2det1}) give us
the cohomology ring $H^*(G,\zz)$ exactly like in the statement of the
theorem, with $[y_j^*] = \zeta_j$.

\bigskip
\noindent{\it Second case:\/} $\rank(\theta-I)=1$ and $\det\theta =
1$. Just like in the proof of Theorem~\ref{theorem:HZ}, suppose $1+m_1
\ne 0$ oor $n_1\ne 0$ (the case where $1+m_1=n_1=0$ is analogous). We have
\[
I-\theta^{-1} = \begin{bmatrix}
1+m_1 & m_2 \\
n_1 & 1+n_2
\end{bmatrix} = 
\begin{bmatrix}
qr' & pr' \\
qs' & ps'
\end{bmatrix},
\]
where the integers $p$ and $q$ satisfy $\gcd(p,q)=1$ and $p,q\ne 0$. The
generators of $H^1(G,\zz) \cong \zz\oplus\zz$ are $[u]$ and
$[y_3^*]$, where
\[
u = -\frac{qs'}{\gcd(qr',qs')}y_1^* + \frac{qr'}{\gcd(qr',qs')}y_2^* = -\frac{s'}{\gcd(r',s')}y_1^* + \frac{r'}{\gcd(r',s')}y_2^*.
\]
We can say even more: we have $\det(I-\theta^{-1}) = 0 \iff
\alpha+\delta = 2$. But $\alpha+\delta=2 \iff (1-qr')+(1-ps') = 2 \iff
qr'+ps' = 0$. Hence $p\mid r'$ and $q\mid s'$. Writing $r' = pr''$ and
$s'=qs''$, we then get $pqr''+pqs'' = 0 \iff s''=-r''$. We can then
write
\begin{equation}\label{eq:r''}
I-\theta^{-1} = \begin{bmatrix}
1+m_1 & m_2 \\
n_1 & 1+n_2
\end{bmatrix} =
\begin{bmatrix}
(1-\delta) & \gamma \\
\beta & (1-\alpha)
\end{bmatrix} =
\begin{bmatrix}
pqr'' & p^2r''\\
-q^2r'' & -pqr''
\end{bmatrix}
\end{equation}
and also $\gcd(\beta,\gamma) = \gcd(r',s') = |r''|$, so
\[
u = -\frac{s'}{\gcd(r',s')}y_1^* + \frac{r'}{\gcd(r',s')}y_2^* = qy_1^*+py_2^*.
\]
Using equation~(\ref{eq:h1cuph1}), we obtain
\begin{align*}
(u\smile u)(z_1) &= -\frac{(1+pqr'')(pqr'')(pqr''-1)q^2}{2} + \frac{(q^2r'')(pqr'')(pqr''-1)pq}{2} + \\
&\phantom{=}+\left(-q^2r'' + \frac{(p^2r'')(-q^2r'')(-q^2r''-1)}{2}\right)pq + \frac{(1-pqr'')(-q^2r'')(-q^2r''-1)p^2}{2} \\
&= \frac{pq^2(p-q)r''}{2}, \\
(u\smile u)(z_2) &= \frac{p^2q(p-q)r''}{2}, \\
(u\smile u)(z_3) &= 0.
\end{align*}
Thus $(u\smile u) = \dfrac{pq(p-q)r''}{2}(qz_1^*+pz_2^*)$. We know
that $H^2(G,\zz) \cong \zz_{\gcd(\beta,\gamma)}\oplus \zz\oplus \zz
\cong \zz_{r''}\oplus\zz\oplus\zz$. The generator of one of the $\zz$
factors in $H^2(G,\zz)$ is $[z_3^*]$. The other two generators are
obtained from the Smith normal form of
\[
I-\theta^{-1} = \begin{bmatrix}
1+m_1 & m_2 \\
n_1 & 1+n_2
\end{bmatrix} =
\begin{bmatrix}
pqr'' & p^2r''\\
-q^2r'' & -pqr''
\end{bmatrix}.
\]
Let $k$ and $\ell$ be integers such that $pk+q\ell=1$. We have
$\begin{bmatrix}\ell & -p \\ k & q\end{bmatrix} \in GL_2(\zz)$ and
\[
\begin{bmatrix}\ell & -p \\ k & q\end{bmatrix}^{-1} =
\begin{bmatrix}q & p \\ -k & \ell\end{bmatrix},
\]
which gives us
\[
(I-\theta^{-1})\begin{bmatrix}\ell & -p \\ k & q\end{bmatrix} =
\begin{bmatrix}pr'' & 0 \\ -qr'' & 0\end{bmatrix} \quad\text{and}\quad
\begin{bmatrix}\ell & -p \\ k & q\end{bmatrix}^{-1}
\begin{bmatrix}z_1^*\\z_2^*\end{bmatrix} =
\begin{bmatrix}qz_1^* + pz_2^*\\ -kz_1^*+\ell z_2^*\end{bmatrix}.
\]
Hence the generator of the factor $\zz_{r''}$ of $H^2(G,\zz)$ is the
class of the map $qz_1^*+pz_2^*$, and the generator of the other
factor $\zz$ of $H^2(G,\zz)$ is the class of $-kz_1^*+\ell
z_2^*$. As $pq(p-q)$ is even, we get
\[
(u\smile u) = \left(\dfrac{pq(p-q)}{2}\right)\cdot r''(qz_1^*+pz_2^*),
\]
which means that $[u]^2=0$. We also have $[y_3^*]^2=0$. Let us compute
$[u]\smile[y_3^*]$: using~(\ref{eq:h1cuph1}), we obtain
\begin{align*}
(u\smile y_3^*)(z_1) &= q,\\
(u\smile y_3^*)(z_2) &= p,\\
(u\smile y_3^*)(z_3) &= 0,
\end{align*}
so $[u]\smile[y_3^*] = [qz_1^*+pz_2^*]$.

Now, to compute the products $H^1(G,\zz)\otimes H^2(G,\zz)
\stackrel{\smile}{\to} H^3(G,\zz)$, we use~(\ref{eq:h1cuph2det1}),
that gives us
\[
\begin{array}{l}
{[u]} \smile [qz_1^*+pz_2^*] = 0, \\
{[u]} \smile [-kz_1^*+\ell z_2^*] = [w^*], \\
{[y_3^*]} \smile [qz_1^*+pz_2^*] = 0, \\
{[y_3^*]} \smile [-kz_1^*+\ell z_2^*] = 0, \\
{[y_3^*]} \smile [z_3^*] = [w^*].
\end{array}
\]
We are then left with the computation of ${[u]} \smile [z_3^*]$. Equation~(\ref{eq:h1cuph2det1}) gives us
\begin{equation}\label{eq:uz3}
(u\smile z_3^*)(w) = \left(\frac{1-\alpha-\gamma-\alpha\gamma}{2}\right)q+\left(\frac{1-\beta-\delta+\beta\delta}{2}\right)p.
\end{equation}

Assuming that $q\ge 0$, it follows from~(\ref{eq:r''}) that
\[
r'' = -\dfrac{\beta}{|\beta|}\gcd(\beta,\gamma) = \dfrac{\gamma}{|\gamma|}\gcd(\beta,\gamma)
\quad\text{and}\quad
q = \sqrt{\frac{-\beta}{r''}} = \sqrt{\frac{|\beta|}{\gcd(\beta,\gamma)}}.
\]
We also have
\[
\dfrac{p}{q} = \dfrac{1-\alpha}{\beta} \iff p = \sqrt{\frac{|\beta|}{\gcd(\beta,\gamma)}} \cdot \frac{1-\alpha}{\beta},
\]
and if we substitute the values of $p$ and $q$ in~(\ref{eq:uz3}), we get
\begin{equation}\label{eq:ucupz3}
[u]\smile [z_3^*] = \frac{1}{2}\sqrt{\frac{|\beta|}{\gcd(\beta,\gamma)}}\left(1-\alpha-\gamma-\alpha\gamma + \frac{(1-\beta)(1-\delta)(1-\alpha)}{\beta}\right)[w^*].
\end{equation}
But $\det\theta = 1$ and $\alpha+\delta=2$, so
$\alpha(2-\alpha)-\beta\gamma =1 \iff -\beta\gamma = (\alpha-1)^2$ and
$1-\delta=\alpha-1$. Substituting in~(\ref{eq:ucupz3}), we are left
with
\begin{equation}
[u]\smile [z_3^*] = \frac{1}{2}\sqrt{\frac{|\beta|}{\gcd(\beta,\gamma)}}(1-\alpha-\alpha\gamma-\beta\gamma)[w^*].
\end{equation}

We get the statement defining $\zeta_1 = [u]$, $\zeta_2 = [y_3^*]$,
$\xi_2 = [-kz_1^*+\ell z_2^*]$, $\xi_3 = [z_3^*]$ and $\chi =
[w^*]$. Observe that $\xi_1 = [qz_1^*+pz_2^*]$ is equal to
$\zeta_1\smile \zeta_2$.

\bigskip
Let us see now what happens if $\rank(\theta-I)=1$ and $\det\theta =
-1$. Again we assume $1+m_1\ne 0$ or $n_1\ne 0$ and write
\[
I-\theta^{-1} = \begin{bmatrix}
1+m_1 & m_2 \\
n_1 & 1+n_2
\end{bmatrix} = 
\begin{bmatrix}
qr' & pr' \\
qs' & ps'
\end{bmatrix},
\]
where the integers $p$ and $q$ satisfy $\gcd(p,q)=1$ and $p,q\ne
0$. In this case, we have $\det(I-\theta^{-1})=0 \iff \alpha+\delta =
0$. Hence $(1+m_1) + (1+n_2) = (1+\delta)+(1+\alpha) = 2 \iff
qr'+ps'=2$, so $\gcd(\beta,\gamma,2) = \gcd(r',s') \in \{1,2\}$. We
separate the analysis in two subcases.

\bigskip
\noindent{\it Third case:\/} First we see what happens when
$\gcd(\beta,\gamma,2)=\gcd(r',s')=1$. If that is the case, then
\begin{align*}
H^1(G,\zz) &= \gen{[u]}\oplus\gen{[y_3^*]} \cong \zz\oplus\zz, \\
H^2(G,\zz) &= \gen{[-kz_1^*+\ell z_2^*]} \cong \zz, \\
H^3(G,\zz) &= \gen{[w^*] \mid 2\cdot[w^*] = 0} \cong \zz_2,
\end{align*}
where $u = -s'y_1^*+r'y_2^*$ and the integers $k$ and $\ell$ satisfy
$pk+q\ell=1$. Looking at the Smith normal form of $(I-\theta^{-1})$
like we did in the previous case, we get that $[qz_1^*+pz_2^*]=0$ in
$H^2(G,\zz)$. Using equation~(\ref{eq:h1cuph1}), we obtain
\[
(u\smile u) = \dfrac{r's'(-r'-s')}{2}(qz_1^*+pz_2^*),
\]
that implies $[u]^2=0$. We also have $[y_3^*]^2=0$ and $(u\smile
y_3^*) = -s'z_1^*+r'z_2^*$.

As $pk+q\ell=1 \iff p(2k)+q(2\ell) = 2$ and $ps'+qr'=2$, there is an
integer $m$ such that
\[
\left|
\begin{array}{l}
s' = 2k-qm\\
r' = 2\ell+pm
\end{array}
\right..
\]
We know that $m$ is odd, since $\gcd(r',s')=1$. Hence
\[
[u]\smile[y_3^*] = [-2kz_1^*+2\ell z_2^*] + [mqz_1^*+mpz_2^*] = 2[-kz_1^*+\ell z_2^*].
\]

As to the products $H^1(G,\zz)\otimes H^2(G,\zz)
\stackrel{\smile}{\to} H^3(G,\zz)$, we use
equation~(\ref{eq:h1cuph2det-1}) to get that $[y_3^*]\smile
[-kz_1^*+\ell z_2^*] = 0$, and we also have
\begin{align*}
(u\smile (-kz_1^*+\ell z_2^*)) &= (r'k-s'\ell)w^* \\
&= ((2\ell + pm)k - (2k-qm)\ell)w^* \\
&= mw^*,
\end{align*}
and, since $m$ is odd, $[u]\smile[-kz_1^*+\ell z_2^*] = [w^*]$. We get
the statement of the theorem in this case by letting $\zeta_1 =[u]$,
$\zeta_2 = [y_3^*]$ and $\xi = [-kz_1^*+\ell z_2^*]$. 

\bigskip
\noindent{\it Fourth case:\/} Let us now analyze the case
$\rank(\theta-I)=1$, $\det\theta=-1$ and $\gcd(\beta,\gamma,2) =
\gcd(r',s')=2$. We have
\begin{align*}
H^1(G,\zz) &= \gen{[u]}\oplus\gen{[y_3^*]} \cong \zz\oplus\zz, \\
H^2(G,\zz) &= \gen{[qz_1^*+pz_2^*]}\oplus\gen{[-kz_1^*+\ell z_2^*]} \cong \zz_2\oplus\zz, \\
H^3(G,\zz) &= \gen{[w^*]} \cong \zz_2,
\end{align*}
where $u = -\dfrac{s'}{2}y_1^* + \dfrac{r'}{2}y_2^*$ and the integers
$k$ and $\ell$ such that $pk+q\ell=1$. Using~(\ref{eq:h1cuph1}), we get
\[
(u\smile u) = \frac{r's'(-r'-s')}{8}(qz_1^*+pz_2^*).
\]
But $r'$ and $s'$ are even, so $r's'(-r'-s') \equiv 0 \pmod{16}$,
which implies $[u]^2=0$. We also have $[y_3]^2=0$ and
\[
(u\smile y_3^*) = -\frac{s'}{2}z_1^* + \frac{r'}{2}z_2^*.
\]
There is an integer $m$ such that
\[
\left|
\begin{array}{l}
s' = 2k-qm,\\
r' = 2\ell+pm.
\end{array}
\right.
\]
In this case $m$ is even, $m=2m'$, hence
\[
(u\smile y_3^*) = (-k+qm')z_1^* + (\ell + pm')z_2^* = (-kz_1^*+\ell z_2^*) + m'(qz_1^*+pz_2^*),
\]
from where it follows that $[u]\smile[y_3^*] = m'[qz_1^*+pz_2^*] +
[-kz_1^*+\ell z_2^*]$, and the integer $m'$ may be even or odd.

We still have $[u]\smile[qz_1^*+pz_2^*]$ and $[u]\smile[-kz_1^*+\ell
  z_2^*]$ left to compute, since equation~(\ref{eq:h1cuph2det-1})
gives us at once that $[y_3^*]\smile[qz_1^*+pz_2^*] = 0$ and
$[y_3^*]\smile[-kz_1^*+\ell z_2^*]=0$. We have
\[
(u\smile (qz_1^*+pz_2^*)) = \frac{-s'p-r'q}{2}w^* = -w^*,
\]
so $[u]\smile[qz_1^*+pz_2^*]=[w^*]$. Finally,
\[
(u\smile (-kz_1^*+\ell z_2^*)) = \frac{kr'-\ell s'}{2}w^*,
\]
hence $[u]\smile [-kz_1^*+\ell z_2^*] = \dfrac{kr'-\ell s'}{2}[w^*]$,
where the integer $\dfrac{kr'-\ell s'}{2}$ may be even or odd. We get
the statement of the theorem in this case setting $\zeta_1=[u]$,
$\zeta_2=[y_3^*]$, $\xi_1 = [qz_1^*+pz_2^*]$, $\xi_2 = [-kz_1^*+\ell
  z_2^*]$. 

\bigskip
\noindent{\itshape Fifth case:\/} $\rank(\theta-I)=2$ e
$\det\theta=1$. In this case we have
\begin{align*}
H^1(G,\zz) &= \gen{[y_3^*]} \cong \zz, \\
H^2(G,\zz) &= \gen{[u] \mid c_1\cdot [u]=0}\oplus\gen{[v] \mid c_2\cdot [v]=0}\oplus\gen{[z_3^*]} \cong \zz_{c_1}\oplus\zz_{c_2}\oplus\zz, \\
H^3(G,\zz) &= \gen{[w^*]} \cong \zz,
\end{align*}
From~(\ref{eq:h1cuph1}), we get $[y_3^*]^2=0$. The generators of the
factors $\zz_{c_1}$ and $\zz_{c_2}$ of $H^2(G,\zz)$ are the classes of
maps $u$ and $v$ that are linear combinations of $z_1^*$ and $z_2^*$,
hence equation~(\ref{eq:h1cuph2det1}) shows us immediately that
$[y_3^*]\smile[u] = [y_3^*]\smile[v] = 0$ and $[y_3^*]\smile[z_3^*] =
[w^*]$. We get the statement of the theorem in this case setting
$\zeta=[y_3^*]$, $\xi_1=[u]$, $\xi_2=[v]$, $\xi_3=[z_3^*]$. 

\bigskip
\noindent{\itshape Sixth case:\/} $\rank(\theta-I)=2$ e
$\det\theta=-1$. In this case we have
\begin{align*}
H^1(G,\zz) &= \gen{[y_3^*]} \cong \zz, \\
H^2(G,\zz) &= \gen{[u]}\oplus\gen{[v]} \cong \zz_{c_1}\oplus\zz_{c_2}, \\
H^3(G,\zz) &= \gen{[w^*]} \cong \zz_2,
\end{align*}
This case is similar to the last one. We have $[y_3^*]^2=0$ and
$[y_3^*]\smile[u] = [y_3^*]\smile[v] = 0$. We get the statement
defining $\zeta=[y_3^*]$, $\xi_1=[u]$, $\xi_2=[v]$, $\chi=[w^*]$.
\end{proof}

Once we've computed the cup product in $H^*(G,\zz)$, doing the same in
$H^*(G,\zz_2)$ and $H^*(G,\zz_p)$ is easy, since essentially all we
have to do is reduce everything modulo $2$ and $p$, respectively. We
summarize the results we get in the next two theorems.

\begin{theorem}
The cohomology ring $H^*(G,\zz_2)$ is given by
\begin{enumerate}
\item If $\theta \equiv \begin{bmatrix} 1 & 0 \\ 0 & 1 \end{bmatrix}
  \pmod{2}$, then
\[
H^*(G,\zz_2) \cong \dfrac{\zz_2[\zeta_1,\zeta_2,\zeta_3]}{\begin{pmatrix}\zeta_1^2 = \left(\frac{1+m_1}{2}\right)\zeta_1\zeta_3 + \frac{m_2}{2}\zeta_2\zeta_3, \\
\zeta_2^2 = \frac{n_1}{2}\zeta_1\zeta_3 + \left(1+\frac{n_2-1}{2}\right)\zeta_2\zeta_3,\: \zeta_3^2=0 \end{pmatrix}},
\]
where $\dim(\zeta_1) = \dim(\zeta_2) = \dim(\zeta_3) = 1$.
\item If $\theta \equiv \begin{bmatrix} 0 & 1 \\ 1 & 0 \end{bmatrix}
  \pmod{2}$, then
\[
H^*(G,\zz_2) \cong \dfrac{\zz_2[\zeta_1,\zeta_2,\xi_1,\xi_2]}{\begin{pmatrix}\zeta_1^2 = \left(1+\frac{\alpha+\delta+m_2+n_1}{2}\right)\xi_1,\:\zeta_2^2=0,\:\zeta_1\zeta_2=0,\:\zeta_1\xi_1=\zeta_2\xi_2,\\
\zeta_1\xi_2 = (S(\alpha,\gamma) + S(\beta,\delta))\zeta_1\xi_1,\: \xi_1^2=0,\: \xi_2^2=0, \xi_1\xi_2=0\end{pmatrix}},
\]
where $\dim(\zeta_1) = \dim(\zeta_2) = 1$ and $\dim(\xi_1) = \dim(\xi_2) = 2$.
\item If $\theta \equiv \begin{bmatrix} 1 & 1 \\ 1 & 0 \end{bmatrix}
  \pmod{2}$, then
\[
H^*(G,\zz_2) \cong \dfrac{\zz_2[\zeta,\xi]}{(\zeta^2=0,\: \xi^2=0)},
\]
where $\dim(\zeta)=1$ and $\dim(\xi)=2$.
\item If $\theta \equiv \begin{bmatrix} 1 & 0 \\ 1 & 1 \end{bmatrix}
  \pmod{2}$, then
\[
H^*(G,\zz_2) \cong \dfrac{\zz_2[\zeta_1,\zeta_2,\xi_1,\xi_2]}{\begin{pmatrix}\zeta_1^2 = \frac{\gamma}{2}\xi_1,\: \zeta_2^2=0,\: \zeta_1\zeta_2=0,\: \zeta_1\xi_2=S(\alpha,\gamma)\zeta_1\xi_1,\\
\zeta_2\xi_1=0,\: \zeta_2\xi_2=\zeta_1\xi_1,\: \xi_1^2=0,\: \xi_2^2=0,\: \xi_1\xi_2=0\end{pmatrix}},
\]
where $\dim(\zeta_1) = \dim(\zeta_2) = 1$ and $\dim(\xi_1) = \dim(\xi_2) = 2$.
\item If $\theta \equiv \begin{bmatrix} 1 & 1 \\ 0 & 1 \end{bmatrix}
  \pmod{2}$, then
\[
H^*(G,\zz_2) \cong \dfrac{\zz_2[\zeta_1,\zeta_2,\xi_1,\xi_2]}{\begin{pmatrix}\zeta_1^2 = \frac{\beta}{2}\xi_1,\: \zeta_2^2=0,\: \zeta_1\zeta_2=0,\: \zeta_1\xi_2=S(\beta,\delta)\zeta_1\xi_1,\\
\zeta_2\xi_1=0,\: \zeta_2\xi_2=\zeta_1\xi_1,\: \xi_1^2=0,\: \xi_2^2=0,\: \xi_1\xi_2=0\end{pmatrix}},
\]
where $\dim(\zeta_1) = \dim(\zeta_2) = 1$ and $\dim(\xi_1) = \dim(\xi_2) = 2$.
\item If $\theta \equiv \begin{bmatrix} 0 & 1 \\ 1 & 1 \end{bmatrix}
  \pmod{2}$, then
\[
H^*(G,\zz_2) \cong \dfrac{\zz_2[\zeta,\xi]}{(\zeta^2=0,\: \xi^2=0)},
\]
where $\dim(\zeta)=1$ and $\dim(\xi)=2$.
\end{enumerate}
\end{theorem}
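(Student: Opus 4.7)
The plan is to reduce the entire cup product computation from Theorem~\ref{theorem:cupZ} modulo $2$, treating each of the six residues of $\theta$ in $GL_2(\zz_2)$ as a separate case. First I would observe that the matrices $[\partial_2^*]$ and $[\partial_3^*]$ from equations~(\ref{eq:partial2*}) and~(\ref{eq:partial3*}) become, over $\zz_2$, the matrix $I - \theta^{-1} \pmod 2 = I + \theta \pmod 2$ (in the upper $2\times 2$ block) and the zero row, since $-1 + \det\theta \equiv 0 \pmod 2$. Consequently $H^1(G,\zz_2) \cong \ker(I+\theta)^T$ and $H^2(G,\zz_2) \cong \coker(I+\theta) \oplus \zz_2$, with explicit cocycle representatives read off basis by basis from the six possibilities for $\theta \bmod 2$.

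Next, for each case I would pick an explicit $\zz_2$-basis of $H^1(G,\zz_2)$ of the form $\{[\zeta_j^*]\}$ chosen among $\{y_1^*, y_2^*, y_3^*\}$ or as $\zz_2$-linear combinations thereof, and a basis of $H^2(G,\zz_2)$ among $\{z_1^*, z_2^*, z_3^*\}$ modulo the image of $\partial_2^*$. Then I would feed these representatives into the formula~(\ref{eq:h1cuph1}) reduced mod $2$ to compute the $H^1 \smile H^1$ products, and into~(\ref{eq:h1cuph2det1}) or~(\ref{eq:h1cuph2det-1}) reduced mod $2$ to compute the $H^1 \smile H^2$ products. Since $H^3(G,\zz_2)\cong\zz_2$ always (by Theorem~\ref{theorem:HZ2}), the $H^2 \smile H^1$ products are determined up to rewriting; the $H^2 \smile H^2$ and higher products vanish for dimension reasons ($H^n = 0$ for $n\ge 4$), forcing the relations $\xi_i^2 = \xi_i\xi_j = 0$.

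The subtle ingredients concern the coefficients of the self-cup products like $\zeta_1^2$. Equation~(\ref{eq:h1cuph1}) has terms such as $\alpha m_1(m_1+1)/2$, $\beta m_2(m_2+1)/2$, $\gamma n_1(n_1-1)/2$, $\delta n_2(n_2-1)/2$, which are \emph{integers} even though they involve division by $2$; I would reduce them modulo $2$ using the identities $m_1,n_2\equiv 1+\delta,1+\alpha\pmod 2$ etc.\ coming from $-\theta^{-1}$. This is what produces the coefficients $(1+m_1)/2$, $m_2/2$, $n_1/2$, $(n_2-1)/2$ appearing in the first case, and the coefficients $\gamma/2$, $\beta/2$ in the fourth and fifth cases. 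For the $H^1\smile H^2$ products, the function $S$ from Lemma~\ref{lema:Sxy} emerges directly as the $\zz_2$-reduction of the bracketed expressions in~(\ref{eq:h1cuph2det1}) and~(\ref{eq:h1cuph2det-1}), because the purely integral terms $\alpha\gamma(\delta-\beta-1)$ and $\beta\delta(\gamma-\alpha+1)$ combine cleanly mod $2$ with the $S$-values.

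The main obstacle is bookkeeping: one must carry out the six case analyses consistently, checking in each case that the chosen generators realize the stated generators $\zeta_i,\xi_j$ of the theorem and that the resulting ring presentation has no hidden relations. The cases where $\theta\pmod 2$ is upper- or lower-triangular with a $1$ off the diagonal (cases 4 and 5) are slightly more delicate, because $\ker(I+\theta)^T$ and $\coker(I+\theta)$ each have dimension $1$ over $\zz_2$ and so we must carefully choose the lift of the generator of the $\zz_2$-factor coming from $z_3^*$. The degenerate cases 3 and 6, where $\rank_{\zz_2}(\theta-I) = 2$, collapse to a polynomial algebra on one degree-$1$ and one degree-$2$ generator with only squaring relations, and fall out immediately from the vanishing of all mixed products.
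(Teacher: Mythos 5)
Your proposal is correct and follows essentially the same route as the paper: the paper obtains this theorem precisely by reducing the integral cup-product formulas~(\ref{eq:h1cuph1}), (\ref{eq:h1cuph2det1}) and~(\ref{eq:h1cuph2det-1}) modulo $2$ and working through the six residues of $\theta$ in $GL_2(\zz_2)$, including the observation that the halved coefficients are integers whose mod-$2$ reductions yield the stated relations and that $S(\alpha,\gamma)$, $S(\beta,\delta)$ supply the $H^1\smile H^2$ coefficients. One small imprecision that does not affect your argument (since you work with explicit representatives of $[\partial_2^*]$): the relevant $2\times 2$ block reduces mod $2$ to $(I-\theta^{-1})^T$, which is rank-equivalent to, but not literally equal to, $I+\theta \pmod 2$; and in your cases 3 and 6 the product $\zeta\xi$ is nonzero, so it is only the products among the degree-$2$ generators that vanish.
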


\begin{theorem}
The cohomology ring $H^*(G,\zz_p)$, for $p$ an odd prime, is given by
\begin{enumerate}
\item If $\rank_{\zz_p}(\theta-I) = 0$ and $\det\theta = 1$, then
\[
H^*(G,\zz_p) \cong \dfrac{\zz_p[\zeta_1,\zeta_2,\zeta_3]}{(\zeta_1^2=\zeta_2^2=\zeta_3^2=0)},
\]
where $\dim(\zeta_1) = \dim(\zeta_2) = \dim(\zeta_3) = 1$.
\item If $\rank_{\zz_p}(\theta-I) = 0$ and $\det\theta=-1$, then
\[
H^*(G,\zz_p) \cong \dfrac{\zz_p[\zeta_1,\zeta_2,\zeta_3]}{(\zeta_1^2=\zeta_2^2=\zeta_3^2=\zeta_1\zeta_2=0)},
\]
where $\dim(\zeta_1) = \dim(\zeta_2) = \dim(\zeta_3) = 1$.
\item If $\rank_{\zz_p}(\theta-I) = 1$ and $\det\theta=1$, then
\begin{align*}
H^1(G, \zz_p) &= \gen{[u]}\oplus\gen{[y_3^*]} \cong \zz_p\oplus\zz_p, \\
H^2(G, \zz_p) &= \dfrac{\gen{z_1^*}\oplus\gen{z_2^*}}{\im\partial_2^*}\oplus\gen{[z_3^*]} \cong \zz_p\oplus\zz_p, \\
H^3(G, \zz_p) &= \gen{[w^*]} \cong \zz_p,
\end{align*}
where $u$ is described in the following way: we have
\[
\begin{bmatrix}
m_1 & m_2 \\
n_1 & n_2
\end{bmatrix} = -\theta^{-1} =
\begin{bmatrix}
-\delta & \gamma \\
\beta & -\alpha
\end{bmatrix}.
\]
If $(1+m_1)\not\equiv 0 \pmod{p}$ or $n_1\not\equiv 0 \pmod{p}$, we
take $u = -n_1y_1^* + (1+m_1)y_2^*$ and, if $(1+m_1) \equiv n_1 \equiv
0 \pmod{p}$, we take $u = (1+n_2)y_1^* - m_2y_2^*$. Assuming, without
loss of generality, that $u = -n_1y_1^* + (1+m_1)y_2^*$, and taking
the class of $m_1z_1^*+n_1z_2^*$ as the generator of
$\dfrac{\gen{z_1^*}\oplus\gen{z_2^*}}{\im\partial_2^*} \cong \zz_p$,
we have
\[
H^*(G,\zz_p) \cong \dfrac{\zz_p[\zeta_1,\zeta_2,\xi_1,\xi_2]}{\begin{pmatrix}\zeta_1^2=0,\: \zeta_2^2=0,\: \zeta_1\zeta_2=\lambda\xi_1,\: \zeta_2\xi_1=0, \\
\zeta_1\xi_1 = (-n_1^2-(1+m_1)m_1)\zeta_2\xi_2, \\
\zeta_1\xi_2 = \left(\frac{(1-\alpha-\gamma-\alpha\gamma)}{2}(-n_1)+\frac{(1-\beta-\delta+\beta\delta)}{2}(1+m_1)\right)\zeta_2\xi_2,\\
\xi_1^2=0, \: \xi_2^2=0, \: \xi_1\xi_2=0 \end{pmatrix}},
\]
where $\zeta_1=[u]$, $\zeta_2=[y_3^*]$, $\xi_1 = [m_1z_1^*+n_1z_2^*]$,
$\xi_2 = [z_3^*]$, and $\lambda\in\zz_p$ is such that $\lambda\xi_1 =
[-n_1z_1^*+(1+m_1)z_2^*]$.
\item If $\rank_{\zz_p}(\theta-I) = 1$ and $\det\theta=-1$, then
\begin{align*}
H^1(G, \zz_p) &= \gen{[u]}\oplus\gen{[y_3^*]} \cong \zz_p\oplus\zz_p, \\
H^2(G, \zz_p) &= \dfrac{\gen{z_1^*}\oplus\gen{z_2^*}}{\im\partial_2^*} \cong \zz_p, \\
H^3(G, \zz_p) &= 0,
\end{align*}
where $u$ where is described like in the previous case and assuming,
without loss of generality, that $u = -n_1y_1^* + (1+m_1)y_2^*$, and
once again taking the class of $m_1z_1^*+n_1z_2^*$ as the generator of
$\dfrac{\gen{z_1^*}\oplus\gen{z_2^*}}{\im\partial_2^*} \cong \zz_p$,
we have 
\[
H^*(G,\zz_p) \cong \dfrac{\zz_p[\zeta_1,\zeta_2,\xi]}{\begin{pmatrix}\zeta_1^2=0,\: \zeta_2^2=0,\: \zeta_1\zeta_2=\lambda\xi\\
\zeta_1\xi=0,\: \zeta_2\xi=0,\: \xi^2=0 \end{pmatrix}},
\]
where $\zeta_1=[u]$, $\zeta_2=[y_3^*]$, $\xi = [m_1z_1^*+n_1z_2^*]$,
and $\lambda\in\zz_p$ is such that $\lambda\xi =
[-n_1z_1^*+(1+m_1)z_2^*]$.
\item If $\rank_{\zz_p}(\theta-I) = 2$ and $\det\theta=1$, then
\[
H^*(G,\zz_p) \cong \dfrac{\zz_p[\zeta,\xi]}{(\zeta^2=0, \xi^2=0)},
\]
where $\dim(\zeta)=1$ and $\dim(\xi)=2$.
\item If $\rank_{\zz_p}(\theta-I) = 2$ and $\det\theta=-1$, then
\[
H^*(G,\zz_p) \cong \dfrac{\zz_p[\zeta]}{(\zeta^2=0)},
\]
where $\dim(\zeta)=1$.
\end{enumerate}
\end{theorem}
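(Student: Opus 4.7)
The plan is to carry out the cup-product computation in $H^*(G,\zz_p)$ by reducing the arguments of Theorem~\ref{theorem:cupZ} modulo $p$. Since $p$ is odd, $2$ is invertible in $\zz_p$, so the fractional coefficients appearing in~(\ref{eq:h1cuph2det1}) and~(\ref{eq:h1cuph2det-1}) are well-defined, and the diagonal approximation constructed in the previous section yields cup-product formulas on $\Hom_{\zz G}(P_*,\zz_p)$ by the same evaluation process as in the integral case. Combined with the additive description provided by Theorem~\ref{theorem:HZp}, this reduces the theorem to a case-by-case verification parallel to the six cases in the proof of Theorem~\ref{theorem:cupZ}.

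First I would fix explicit cocycle representatives in each case. In $H^1(G,\zz_p)$ the generators are (some of) $[y_1^*]$, $[y_2^*]$, $[y_3^*]$, with an extra combination $[u]$ when $\rank_{\zz_p}(\theta-I)=1$; in $H^2(G,\zz_p)$ they are representatives in $\gen{z_1^*,z_2^*}/\im\partial_2^*$ together with $[z_3^*]$ precisely when $\det\theta=1$; and $H^3(G,\zz_p)=\gen{[w^*]}$ when $\det\theta=1$ and is zero otherwise.  For each pair of generators I would plug into~(\ref{eq:h1cuph1}) for $H^1\otimes H^1 \to H^2$ and into~(\ref{eq:h1cuph2det1}) or~(\ref{eq:h1cuph2det-1}) for $H^1\otimes H^2 \to H^3$ (depending on $\det\theta$), then reduce mod $p$ and identify the result in the chosen basis.

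The rank-$0$ and rank-$2$ cases are quick.  For rank $0$ the quantities $1+m_1$, $n_1$, $m_2$, $1+n_2$ all vanish mod $p$, so~(\ref{eq:h1cuph1}) produces only the terms $u(y_i)\otimes v(y_j)$ with trivial coefficients, yielding the relations of cases 1 and 2 immediately.  For rank $2$, $H^1(G,\zz_p) \cong \zz_p$ is generated by $[y_3^*]$ alone, and essentially all cup products reduce to instances of~(\ref{eq:h1cuph2det1}) or~(\ref{eq:h1cuph2det-1}) evaluated at $u=y_3^*$, giving the relations of cases 5 and 6 by inspection.  In the rank-$1$ cases, I would take $u=-n_1 y_1^*+(1+m_1)y_2^*$ (or the variant when $1+m_1\equiv n_1\equiv 0 \pmod p$), choose $[m_1 z_1^*+n_1 z_2^*]$ as the generator of $\gen{z_1^*,z_2^*}/\im\partial_2^*$, and define $\lambda\in\zz_p$ via $-n_1 z_1^*+(1+m_1)z_2^*\equiv \lambda(m_1 z_1^*+n_1 z_2^*) \pmod{\im\partial_2^*}$; then~(\ref{eq:h1cuph1}) gives $\zeta_1\zeta_2 = \lambda \xi_1$, and direct substitution into~(\ref{eq:h1cuph2det1}) or~(\ref{eq:h1cuph2det-1}) produces the relations for $\zeta_1\xi_1$ and $\zeta_1\xi_2$.

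The main obstacle is the bookkeeping in case 3 ($\rank_{\zz_p}(\theta-I)=1$, $\det\theta=1$), where $H^1$, $H^2$, and $H^3$ are all nontrivial and every product must be rewritten in the chosen basis modulo $\im\partial_2^*$.  The identity $\zeta_1\xi_1 = (-n_1^2 - m_1(1+m_1))\zeta_2\xi_2$ comes from evaluating $u(y_1)v(z_2) - u(y_2)v(z_1)$ in~(\ref{eq:h1cuph2det1}) with $v=m_1 z_1^*+n_1 z_2^*$ and recognising $\zeta_2\xi_2 = [y_3^*]\smile[z_3^*] = [w^*]$, while verifying $\zeta_1^2 = 0$ requires checking that the quadratic combination of $m_1, n_1$ produced by~(\ref{eq:h1cuph1}) lies in $\im\partial_2^* \bmod p$, which is precisely where the rank-$1$ hypothesis on $(I-\theta^{-1}) \bmod p$ is used.
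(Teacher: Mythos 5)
Your proposal is correct and takes essentially the same route as the paper, which offers no separate argument for this theorem beyond the remark that one reduces the integral cup-product formulas~(\ref{eq:h1cuph1}), (\ref{eq:h1cuph2det1}) and~(\ref{eq:h1cuph2det-1}) modulo $p$ and combines them with the additive description of Theorem~\ref{theorem:HZp}; that is exactly what you do. Your additional care in restratifying the cases by $\rank_{\zz_p}(\theta-I)$ rather than the integral rank, and in fixing mod-$p$ cocycle representatives before evaluating the formulas, supplies precisely the details the paper leaves implicit.
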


\appendix
\section{Appendix}

The following table describes the sets $I_1,\:J_1,\:I_2,\:J_2$ that
define the element $E\in \zz G$ in Theorem~\ref{theorem:resolution},
depending on the signs of $m_1$, $n_1$, $m_2$ and $n_2$. The first
four columns indicate the sign (or the value) of the corresponding
variable. The symbol $\zz$ in one of the first four columns indicate
that the value of the corresponding variable doesn't matter for the
computation of the sets $I_1,\:J_1,\:I_2,\:J_2$.

\begin{longtable}{|c|c|c|c|p{6cm}|p{4cm}|}
\caption{The sets $I_1$, $J_1$, $I_2$ and $J_2$}\label{table:ij}\\
\hline
$m_1$ & $n_1$ & $m_2$ & $n_2$ & \hfil $I_1,\:J_1,\:I_2,\:J_2$ \hfil & Remark \\ \hline
$+$ & $+$ & $+$ & $+$ &
$I_1 = [-m_1-m_2,-m_2-1]\cap\zz$,\par
$J_1 = [-n_2,-n_1-1]\cap\zz$,\par
$I_2 = [-m_2,-m_1-1]\cap\zz$,\par
$J_2 = [-n_1,-1]\cap\zz$ & if $m_1<m_2$, $n_1<n_2$ \\ \hline
$+$ & $+$ & $+$ & $+$ &
$I_1 = [-m_1,-m_2-1]\cap\zz$,\par
$J_1 = [-n_2,-1]\cap\zz$,\par
$I_2 = [-m_1-m_2,-m_1-1]\cap\zz$,\par
$J_2 = [-n_1,-n_2-1]\cap\zz$ & if $m_1>m_2$, $n_1>n_2$\\ \hline
$-$ & $-$ & $-$ & $-$ &
$I_1 = [-m_2,-m_1-m_2-1]\cap\zz$,\par
$J_1 = [-n_1,-n_2-1]\cap\zz$,\par
$I_2 = [-m_1,-m_2-1]\cap\zz$,\par
$J_2 = [0,-n_1-1]\cap\zz$ & if $m_1>m_2$, $n_1>n_2$\\ \hline
$-$ & $-$ & $-$ & $-$ &
$I_1 = [-m_2,-m_1-1]\cap\zz$,\par
$J_1 = [0,-n_2-1]\cap\zz$,\par
$I_2 = [-m_1,-m_1-m_2-1]\cap\zz$,\par
$J_2 = [-n_2,-n_1-1]\cap\zz$ & if $m_1<m_2$, $n_1<n_2$\\ \hline
$-$ & $+$ & $+$ & $-$ &
$I_1 = [-m_2,-m_1-m_2-1]\cap\zz$,\par
$J_1 = [0,-n_2-1]\cap\zz$,\par
$I_2 = [-m_1-m_2,-m_1-1]\cap\zz$,\par
$J_2 = [-n_1,-1]\cap\zz$ & \\ \hline
$+$ & $-$ & $-$ & $+$ & 
$I_1 = [-m_1-m_2,-m_2-1]\cap\zz$,\par
$J_1 = [-n_2,-1]\cap\zz$,\par
$I_2 = [-m_1,-m_1-m_2-1]\cap\zz$,\par
$J_2 = [0,-n_1-1]\cap\zz$ & \\ \hline
$+$ & $-$ & $+$ & $-$ &
$I_1 = [-m_2,-m_1-1]\cap\zz$,\par
$J_1 = [0,-n_1-1]\cap\zz$,\par
$I_2 = [-m_1-m_2,-m_2-1]\cap\zz$,\par
$J_2 = [-n_1,-n_2-1]\cap\zz$ & if $m_1<m_2$, $n_1>n_2$\\ \hline
$+$ & $-$ & $+$ & $-$ & 
$I_1 = [-m_1-m_2,-m_1-1]\cap\zz$,\par
$J_1 = [-n_2,-n_1-1]\cap\zz$,\par
$I_2 = [-m_1,-m_2-1]\cap\zz$,\par
$J_2 = [0,-n_2-1]\cap\zz$ & if $m_1>m_2$, $n_1<n_2$\\ \hline
$-$ & $+$ & $-$ & $+$ & 
$I_1 = [-m_1,-m_1-m_2-1]\cap\zz$,\par
$J_1 = [-n_1,-n_2-1]\cap\zz$,\par
$I_2 = [-m_2,-m_1-1]\cap\zz$,\par
$J_2 = [-n_2,-1]\cap\zz$ & if $m_1<m_2$, $n_1>n_2$\\ \hline
$-$ & $+$ & $-$ & $+$ & 
$I_1 = [-m_1,-m_2-1]\cap\zz$,\par
$J_1 = [-n_1,-1]\cap\zz$,\par
$I_2 = [-m_2,-m_1-m_2-1]\cap\zz$,\par
$J_2 = [-n_2,-n_1-1]\cap\zz$ & if $m_1>m_2$, $n_1<n_2$\\ \hline
$+$ & $+$ & $-$ & $-$ & 
$I_1 = [-m_1,-m_1-m_2-1]\cap\zz$,\par
$J_1 = [-n_1,-1]\cap\zz$,\par
$I_2 = [-m_1-m_2,-m_2-1]\cap\zz$,\par
$J_2 = [0,-n_2-1]\cap\zz$ & \\ \hline
$-$ & $-$ & $+$ & $+$ & 
$I_1 = [-m_1-m_2,-m_1-1]\cap\zz$,\par
$J_1 = [0,-n_1-1]\cap\zz$,\par
$I_2 = [-m_2,-m_1-m_2-1]\cap\zz$,\par
$J_2 = [-n_2,-1]\cap\zz$ & \\ \hline
$0$ & $1$ & $1$ & $\zz$ &
$I_1 = \emptyset$,\par
$J_1 = \emptyset$,\par
$I_2 = \{-1\}$,\par
$J_2 = \{-1\}$ & \\ \hline
$0$ & $-1$ & $-1$ & $\zz$ &
$I_1 = \emptyset$,\par
$J_1 = \emptyset$,\par
$I_2 = \{0\}$,\par
$J_2 = \{0\}$ & \\ \hline
$0$ & $1$ & $-1$ & $\zz$ &
$I_1 = \{0\}$,\par
$J_1 = \{-1\}$,\par
$I_2 = \emptyset$,\par
$J_2 = \emptyset$ & \\ \hline
$0$ & $-1$ & $1$ & $\zz$ &
$I_1 = \{-1\}$,\par
$J_1 = \{0\}$,\par
$I_2 = \emptyset$,\par
$J_2 = \emptyset$ & \\ \hline
$1$ & $0$ & $\zz$ & $1$ &
$I_1 = \{-m_2-1\}$,\par
$J_1 = \{-1\}$,\par
$I_2 = \emptyset$,\par
$J_2 = \emptyset$ & \\ \hline
$-1$ & $0$ & $\zz$ & $-1$ &
$I_1 = \{-m_2\}$,\par
$J_1 = \{0\}$,\par
$I_2 = \emptyset$,\par
$J_2 = \emptyset$ & \\ \hline
$1$ & $0$ & $\zz$ & $-1$ &
$I_1 = \emptyset$,\par
$J_1 = \emptyset$,\par
$I_2 = \{-m_2-1\}$,\par
$J_2 = \{0\}$ & \\ \hline
$-1$ & $0$ & $\zz$ & $1$ &
$I_1 = \emptyset$,\par
$J_1 = \emptyset$,\par
$I_2 = \{-m_2\}$,\par
$J_2 = \{-1\}$ & \\ \hline
$1$ & $\zz$ & $0$ & $1$ &
$I_1 = \{-1\}$,\par
$J_1 = \{-1\}$,\par
$I_2 = \emptyset$,\par
$J_2 = \emptyset$ & \\ \hline
$-1$ & $\zz$ & $0$ & $-1$ &
$I_1 = \{0\}$,\par
$J_1 = \{0\}$,\par
$I_2 = \emptyset$,\par
$J_2 = \emptyset$ & \\ \hline
$1$ & $\zz$ & $0$ & $-1$ &
$I_1 = \emptyset$,\par
$J_1 = \emptyset$,\par
$I_2 = \{-1\}$,\par
$J_2 = \{0\}$ & \\ \hline
$-1$ & $\zz$ & $0$ & $1$ &
$I_1 = \emptyset$,\par
$J_1 = \emptyset$,\par
$I_2 = \{0\}$,\par
$J_2 = \{-1\}$ & \\ \hline
$\zz$ & $1$ & $1$ & $0$ &
$I_1 = \emptyset$,\par
$J_1 = \emptyset$,\par
$I_2 = \{-m_1-1\}$,\par
$J_2 = \{-1\}$ & \\ \hline
$\zz$ & $-1$ & $-1$ & $0$ &
$I_1 = \emptyset$,\par
$J_1 = \emptyset$,\par
$I_2 = \{-m_1\}$,\par
$J_2 = \{0\}$ & \\ \hline
$\zz$ & $1$ & $-1$ & $0$ &
$I_1 = \{-m_1\}$,\par
$J_1 = \{-1\}$,\par
$I_2 = \emptyset$,\par
$J_2 = \emptyset$ & \\ \hline
$\zz$ & $-1$ & $1$ & $0$ &
$I_1 = \{-m_1-1\}$,\par
$J_1 = \{0\}$,\par
$I_2 = \emptyset$,\par
$J_2 = \emptyset$ & \\ \hline
\end{longtable}

\bibliographystyle{plain}
\bibliography{references}

\end{document}